\newtheorem{thm}{Theorem}[section]
\newtheorem{lem}{Lemma}[section]
\theoremstyle{definition}
\newtheorem{defn}{Definition}[section]
\newtheorem{rem}{Remark}[section]
\numberwithin{equation}{section}
\def\i1n{i=1,\cdots,n}
\def\j1n{j=1,\cdots,n}
\def\ij1n{i,j=1,\cdots,n}
\def\R{\mathbb R}
\newcommand{\be}{\begin{equation}}
	\newcommand{\ee}{\end{equation}}
\newcommand{\beq}{\begin{equation*}}
	\newcommand{\eeq}{\end{equation*}}
\begin{document}

	\title
	{\bf Existence of solutions to a generalized self-dual Chern-Simons equation on graphs}
	\author{Yingshu L\"{u} \footnote{Email address: yingshulv@fudan.edu.cn. }
		\hspace{.2in} Peirong Zhong \footnote{Email address: 19210180058@fudan.edu.cn.}
		\\ \\}
	\date{}
	\maketitle
	\begin{center}
School of Mathematical Sciences, Fudan University, Shanghai, China
\end{center}
\date{}
\maketitle
\begin{abstract}
Let $ G=(V,E) $ be a connected finite graph and $ \Delta $ the usual graph Laplacian. In this paper, we consider a generalized self-dual Chern-Simons equation on the graph $G$
\begin{eqnarray}\label{one1}
\Delta{u}=-\lambda{e^{F(u)}[e^{F(u)}-1]^2}+4\pi\sum_{i=1}^{M}{\delta_{p_{j}}},
\end{eqnarray}
where
\begin{align} \nonumber
	F(u)=
	\begin{cases}
		\widetilde{F}(u), \   \quad u\leq0,\\
		0, \quad \quad \quad u>0,
	\end{cases}
\end{align}
$ \widetilde{F}(u) $ satisfies
$ u=1+{\widetilde {F}(u)}-e^{\widetilde {F}(u)} $, $ \lambda>0 $,
$M$ is any fixed positive integer,
$ \delta_{p_{j}} $ is the Dirac delta mass at the vertex $p_j$,
and $p_1$, $p_2$, $\cdots$, $p_M$ are arbitrarily chosen distinct vertices on the graph. We first prove that there is a critical value ${\lambda}_c$ such that if $\lambda \geq{\lambda}_c$, then the generalized self-dual Chern-Simons equation has a solution $u_{\lambda}$. Applying the existence result, we develop a new method to construct a solution of the equation \eqref{one1} which is monotonic with respect to $\lambda$ when $\lambda \geq{\lambda}_c$. Then we establish that there exist at least two solutions of the equation via the variational method for $\lambda>{\lambda}_c$. Furthermore, we give a fine estimate of the monotone solution which can be applied to other related problems.
\end{abstract}
	\emph{Keyword: Chern-Simons equation, finite graph, existence, the variational method}	\\
	{\em Mathematics Subject Classification:} 35A01, 35A15, 35R02
	
	\section{Introduction}
	
In this paper, we study the following generalized self-dual Chern-Simons equation on a connected finite graph $G$
	\begin{equation}\label{one}
		\Delta{u}=-\lambda{e^{F(u)}[e^{F(u)}-1]^2}+4\pi\sum_{i=1}^{M}{\delta_{p_{j}}},
	\end{equation}
where $\Delta$ is the graph Laplacian,
    \begin{align}
		\label{fu}
		F(u)=
		\begin{cases}
			\widetilde{F}(u), \   \quad u\leq0,\\
			0, \quad \quad \quad u>0,
		\end{cases}
	\end{align}
	$ \widetilde{F}(u) $ satisfies
	$ u=1+{\widetilde{F}(u)}-e^{\widetilde{F}(u)} $, $p_1,p_2,...,p_M$ are distinct points in the graph $G$ and $ \delta_{p_{j}} $ satisfies
	\begin{align} \nonumber
		\delta_{p_{j}}=
		\begin{cases}
			1, \   \quad \mbox{at $ p_j $}\\
			0, \quad \mbox{otherwise}.
		\end{cases}
	\end{align}
Note that if $F(u)=u$, then the solutions of the generalized self-dual Chern-Simons model is referred to as vortices. We remark that the notion of vortices play important roles in many aspects of sciences including superconductivity \cite{JT}, optics \cite{BE}, quantum Hall effects \cite{S}, for which one can read.

The study of vortices in (2+1)-dimensional Chern-Simons gauge theory has attracted much attention recently. One of the important features of these vortices, which differs from Nielsen-Olesen vortices \cite{NO}, is that they are magnetically and elecrically charged. In the Chern-Simons model, the Yang-Mills (or Maxwell) term does not appear in the action Lagrangian density and only the Chern-Simons term governs eletromagnetism. Under the condition that the Higgs potential takes a sextic form, the static equations of motion can be deduced by reducing a system of second-order differential equations to a self-dual system of first-order equations, and then the topological multivortices \cite{SY},\cite{W}, non-topological multivortices\cite{CHMY}-\cite{CFL},\cite{SY2}, and doubly periodic vortices \cite{H} can all be studied rigorously in mathematical methods.

A generalized self-dual Chern-Simons model was later proposed by \cite{HKP} and now plays an important role in various areas of physics, many researchers did a lot of significant work on the existence of non-topological vortices and topological vortices in this Chern-Simons model \cite{CI1},\cite{TY},\cite{Y}. However, the existence of double periodic vortices in this Chern-Simons model had been an open problem, until recently Han solved this problem in \cite{H}. He reduced the generalized self-dual Chern-Simons equation to a quasilinear elliptic equation by appropriate transformations, and established the existence of double periodic vortices of the Chern-Simons model by the methods of subsolutions and supersolutions. In this paper, we investigate the existence of the solutions to the generalized Chern-Simons equation on a connected finite graph.

In recent years, the research on the elliptic equations on graphs has attracted attention increasingly from scientists. Grigor'yan, Lin and Yang \cite{GLY} considered the Kazdan-Warner equation on a finite graph, where the Kazdan-Warner equation was initially studied on a manifold \cite{KW}. In \cite{GLY}, they gave the solvability of the following equation depending on the sign of $c$
$$\Delta u=c-he^u,$$
where $c$ is a constant and $h: V \to \mathbb{R}$ is a function. For more results of solvability of the Kazdan-Warner equation, we refer readers to \cite{GLY3}-\cite{GWK},\cite{KS}. The Kazdan-Warner equation is closely related to the mean field equation investigated originally in the prescribed curvature problem in geometry. Huang, Lin and Yau \cite{HLYS} proved the existence of solution to the following two mean field equations
\begin{eqnarray}\label{m}
 \Delta u + e^u = \rho \delta_0,
\end{eqnarray}
and
  $$ \Delta u =\lambda e^u (e^u-1) + 4\pi \sum_{j=1}^{M} \delta_{p_j} $$
on an arbitrary connected finite graph, where $\rho>0$ and $\lambda>0$ are constants.

 From these results, we study the generalized Chern-Simons equation \eqref{one} on a finite graph $G$. Inspired by the idea from \cite{TG}, we first show that there is a critical value $\lambda_c$ depending on the graph $G$ such that if $\lambda>\lambda_c$, the equation \eqref{one} has a solution via the variation method. Moreover, the existence of the solutions to the equation \eqref{one} also holds when $\lambda=\lambda_c$ by the properties of finite graphs. Applying the existence result for $\lambda\geq {\lambda}_{c}$, we put forward a new idea to construct a solution of the equation \eqref{one} which is monotonic with respect to $\lambda$. With the aid of the existence results and Mountain Pass theorem, we show that there exist at least two solutions of the equation \eqref{one}. The crucial point is that we apply the properties of the equation on the finite graph to prove the Palais-Smale condition used in Mountain Pass theorem which is different from the equations on Euclidean spaces. Furthermore, under the monotonicity property of the solutions, we give a fine estimate of the solutions for almost everywhere $\lambda>{\lambda}_c$.

The organization of the paper is as follows. In Section 2, we introduce the notations and preliminaries of the paper and then state our main results. In Section 3, we present the existence of the solution to the generalized self-dual Chern-Simons equation. A fine result about the solutions is established in Section 4.

\section{Settings and Main Results}
	Let $ G=(V,E) $ be a connected finite graph. For any edge $ xy\in E $, we assume that the symmetric weights $ \omega_{xy}=\omega_{yx} $ satisfying $\omega_{xy}>0$. Let $ \mu:V\to {\R^+} $ be a finite measure. For any function $ u:V\to \R $, the Laplace operator acting on $ u $ is defined by

    \begin{equation}
    	\label{two}
    	\Delta{u}(x)=\frac{1}{\mu(x)}\sum_{y\sim{x}}\omega_{yx}[u(y)-u(x)],
    \end{equation}
    where $y\sim x$ means ${xy}\in E$. The associated gradient form stands for
    \begin{eqnarray}
    \Gamma{(u,v)}(x)=\frac{1}{2\mu (x)} \sum_{y\sim x}\omega_{xy}(u(y)-u(x))(v(y)-v(x))
    \end{eqnarray}
for any $u, v:V\to \R$.

For any function $f: V \to \mathbb{R}$, an integral of $f$ over $V$ is defined by
\begin{eqnarray}
\int_V f d\mu= \sum_{x\in V} \mu(x)f(x),
\end{eqnarray}
and then
    \begin{equation}
    	\label{gra}
    	\frac{1}{2} \int_V {|\nabla u|}^2 d\mu
    	:=\frac{1}{2}\sum_{x \in V} \sum_{y\sim{x}}
    	\omega_{xy} {[u(y)-u(x)]}^2.
    \end{equation}

Define a Sobolev space and a norm by
\begin{eqnarray*}
W^{1,2}(V)=\{u: V \to \mathbb{R}| \int_V (|\nabla u|^2+u^2)d\mu <+\infty\},
\end{eqnarray*}
and
\begin{eqnarray*}
\|u\|_{W^{1,2}(V)}=\left(\int_V(|\nabla u|^2+u^2)d\mu\right)^{\frac{1}{2}}
\end{eqnarray*}
respectively. Since $G$ is a finite graph, the Sobolev space $W^{1,2}(V)$ is the set of all functions on $V$, a finite dimensional linear space. Then the following Sobolev embedding was introduced in [\cite{GLY}, Lemma 5].
\begin{lem}\label{le1}
Let $G=(V, E)$ be a finite graph. The Sobolev space $W^{1,2}(V)$ is pre-compact. Namely, if $\{u_j\}$ is bounded in $W^{1,2}(V)$, then there exists some $u\in W^{1,2}(V)$ such that up to a subsequence, $u_j\to u$ in $W^{1,2}(V)$.
\end{lem}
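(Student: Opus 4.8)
The plan is to reduce the statement to elementary linear algebra by exploiting that $G$ is finite. Write $N=|V|$ for the (finite) number of vertices. Every function $u\colon V\to\mathbb{R}$ is completely determined by its vector of values $(u(x))_{x\in V}\in\mathbb{R}^{N}$, and conversely every such vector determines a function on $V$; moreover $\int_{V}(|\nabla u|^{2}+u^{2})\,d\mu$ is a finite sum, hence finite, for every such $u$. Thus $W^{1,2}(V)$ is, as a set, all of $\mathbb{R}^{V}$, a finite-dimensional real vector space, and $\|\cdot\|_{W^{1,2}(V)}$ is a genuine norm on it.

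Next, recall that on a finite-dimensional vector space all norms are equivalent; in particular there are constants $0<c\le C<\infty$ with $c\,\|u\|_{\infty}\le\|u\|_{W^{1,2}(V)}\le C\,\|u\|_{\infty}$ for every $u$, where $\|u\|_{\infty}=\max_{x\in V}|u(x)|$. Given a sequence $\{u_{j}\}$ bounded in $W^{1,2}(V)$, the associated value vectors $(u_{j}(x))_{x\in V}$ are therefore bounded in $\mathbb{R}^{N}$, so by the Bolzano--Weierstrass theorem a subsequence (not relabelled) converges componentwise: there is $u\colon V\to\mathbb{R}$ with $u_{j}(x)\to u(x)$ for each of the finitely many $x\in V$. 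By the previous paragraph this limit function automatically belongs to $W^{1,2}(V)$, with nothing to verify.

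Finally, I would upgrade pointwise convergence to norm convergence. By \eqref{gra} and the definition of the norm, $\|u_{j}-u\|_{W^{1,2}(V)}^{2}$ is a finite sum of terms, each of which is a polynomial (with fixed coefficients $\omega_{xy}$ and $\mu(x)$) in the finitely many real numbers $(u_{j}-u)(x)$, $x\in V$. Since each of these numbers tends to $0$, the whole sum tends to $0$, i.e.\ $u_{j}\to u$ in $W^{1,2}(V)$, which is the claimed precompactness.

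I do not expect any genuine obstacle here: the one point worth isolating is the observation that the finiteness of $V$ forces $W^{1,2}(V)$ to be finite-dimensional, after which the proof is just Bolzano--Weierstrass together with the equivalence of norms on a finite-dimensional space. This is in sharp contrast with the Euclidean or manifold setting, where the analogous Rellich--Kondrachov embedding is a nontrivial compactness theorem rather than a mere identification of function spaces.
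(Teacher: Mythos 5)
Your proof is correct, and it is essentially the standard argument: the paper itself does not reprove this lemma but quotes it from [\cite{GLY}, Lemma 5], whose proof rests on exactly the same observation that finiteness of $V$ makes $W^{1,2}(V)$ a finite-dimensional space, so boundedness plus Bolzano--Weierstrass yields a subsequence converging at every vertex and hence in the norm. Nothing further is needed.
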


Let $(X, \|\cdot\|)$ be a Banach space, $J: X \to \mathbb{R}$ be a functional. In order to prove the existence of the solutions of the equation (\ref{one}), we first need the following Palais-Smale condition.

\begin{defn}
A functional $J\in C^1(X, \mathbb{R})$ satisfies the Palais-Smale condition if each sequence $\{u_k\} \subset X$ such that
\begin{flalign}\nonumber
		 &(1)\  J(u_k)  \to c \ \mbox{for some constant} \ c, \  as \  k \to + \infty, &&\\ \nonumber
		 &(2)\  \Vert J'(u_k) \Vert \to 0 \ \mbox{in} \ X, \  as \  k \to + \infty&& \nonumber
	\end{flalign}
is precompact in $X$.
\end{defn}

With the help of the Palais-Smale condition, Ambrosetti and Rabinowitz established the following Mountain Pass theorem \cite{A}.

\begin{lem} \label{le2}
Let $(X, \|\cdot\|)$ be a Banach space, $J\in C^1(X, \mathbb{R})$, $e\in X$ and $r>0$  such that $\|e\|>r$ and
\begin{eqnarray*}
b:=\inf_{\|u\|=r} J(u)>J(0)\geq J(e).
\end{eqnarray*}		
If $J$ satisfies the Palais-Smale condition with $c:=\inf_{\gamma\in \Gamma}\max_{t\in[0,1]}J(\gamma{(t)})$, where
		\begin{equation*}
			\Gamma := \{ \gamma \in C([0,1], X) | \  \gamma(0)=0, \gamma(1)=e \},
		\end{equation*}
		then $c$ is a critical value of $J$.
	\end{lem}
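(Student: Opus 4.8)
\emph{Proof proposal.} The plan is to follow the classical Ambrosetti--Rabinowitz argument based on a deformation lemma. First I would verify that $c$ is finite and $c\ge b$: for every $\gamma\in\Gamma$ the map $t\mapsto\|\gamma(t)\|$ is continuous on $[0,1]$ with value $0$ at $t=0$ and value $\|e\|>r$ at $t=1$, so by the intermediate value theorem $\|\gamma(t_\gamma)\|=r$ for some $t_\gamma\in(0,1)$; hence $\max_{t\in[0,1]}J(\gamma(t))\ge J(\gamma(t_\gamma))\ge b$, and taking the infimum over $\Gamma$ gives $c\ge b> J(0)\ge J(e)$. In particular $c-J(0)>0$ and $c-J(e)>0$.

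Next, suppose for contradiction that $c$ is not a critical value of $J$. Using the Palais--Smale condition together with this assumption, I would first show there are constants $\bar\varepsilon>0$ and $\delta>0$ with $\|J'(u)\|\ge\delta$ whenever $|J(u)-c|\le\bar\varepsilon$: otherwise one could choose $u_k$ with $|J(u_k)-c|\to0$ and $\|J'(u_k)\|\to0$, and a Palais--Smale subsequence would converge to some $\bar u$ with $J(\bar u)=c$ and $J'(\bar u)=0$ by continuity of $J$ and $J'$, contradicting that $c$ is not critical. Shrinking $\bar\varepsilon$ if necessary, we may also assume $\bar\varepsilon<c-J(0)$ (recall $J(0)\ge J(e)$). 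The uniform bound $\|J'\|\ge\delta$ near the level $c$ is precisely what is needed to build, via a locally Lipschitz pseudo-gradient vector field for $J$ and the flow of a truncated, renormalized version of it, a continuous deformation $\eta\colon[0,1]\times X\to X$ and a number $\varepsilon\in(0,\bar\varepsilon)$ such that: (i) $\eta(0,u)=u$; (ii) $\eta(t,u)=u$ whenever $|J(u)-c|\ge\bar\varepsilon$; (iii) $t\mapsto J(\eta(t,u))$ is non-increasing; and (iv) $\eta\big(1,\{u:J(u)\le c+\varepsilon\}\big)\subset\{u:J(u)\le c-\varepsilon\}$. Proving this deformation lemma is the technical heart of the argument, and in a general Banach space the one delicate point is that one must work with a pseudo-gradient field rather than $\nabla J$ itself.

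Finally I would derive the contradiction. Since $\bar\varepsilon<c-J(0)$ and $J(e)\le J(0)$, both $0$ and $e$ lie outside $\{u:|J(u)-c|\le\bar\varepsilon\}$, so by (ii) they are fixed by $\eta(1,\cdot)$. By the definition of $c$ as an infimum, pick $\gamma\in\Gamma$ with $\max_{t\in[0,1]}J(\gamma(t))<c+\varepsilon$, i.e.\ $\gamma([0,1])\subset\{u:J(u)\le c+\varepsilon\}$, and set $\tilde\gamma(t):=\eta(1,\gamma(t))$. Then $\tilde\gamma$ is continuous, $\tilde\gamma(0)=\eta(1,0)=0$ and $\tilde\gamma(1)=\eta(1,e)=e$, so $\tilde\gamma\in\Gamma$; but (iv) gives $\max_{t\in[0,1]}J(\tilde\gamma(t))\le c-\varepsilon<c$, which contradicts the definition of $c$. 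Hence $c$ is a critical value of $J$. All steps except the deformation lemma are routine bookkeeping; that lemma is where the real work lies.
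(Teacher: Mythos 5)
The paper does not actually prove this lemma: it is quoted as the classical Mountain Pass theorem and attributed to Ambrosetti--Rabinowitz \cite{A}, so there is no in-paper argument to compare against. Your outline is the standard proof of that theorem and is correct as far as it goes: the intermediate-value argument giving $c\ge b>J(0)\ge J(e)$, the use of the Palais--Smale condition to produce constants $\bar\varepsilon,\delta>0$ with $\|J'(u)\|\ge\delta$ on the strip $|J(u)-c|\le\bar\varepsilon$ if $c$ were not critical, and the final step of deforming a path $\gamma\in\Gamma$ with $\max_{t}J(\gamma(t))<c+\varepsilon$ --- whose endpoints are fixed by the deformation because $\bar\varepsilon<c-J(0)$ and $J(e)\le J(0)$ --- into a path lying in $\{J\le c-\varepsilon\}$, contradicting the definition of $c$, are all sound. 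The only piece you do not carry out is the quantitative deformation lemma via a pseudo-gradient flow; you state it in a correct form and identify it as the technical core, and that is exactly the content supplied by the cited reference, so beyond that acknowledged black box there is no gap.
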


Our first main result in this paper can be stated as follows.
    \begin{thm}
    	\label{thma}
    	There is a critical value $ \lambda_c $ depending on $ G $ satisfying
    	\begin{equation}
    		\lambda_c\ge \frac{27\pi M}{|V|}
    	\end{equation}
    such that\\
    (i) If $ \lambda \ge \lambda_c $, the equation (\ref{one}) has a solution $u_{\lambda}$ on $ G $, and if $ \lambda< \lambda_c $, the equation (\ref{one}) has no solution.\\
    (ii) If $ \lambda \ge \lambda_c $, there exists a solution $\tilde{u}_{\lambda}$ of the equation (\ref{one}) on $ G $ satisfying $\tilde{u}_{\lambda_1}\geq \tilde{u}_{\lambda_2}$ when $\lambda_1>\lambda_2$.\\
    (iii) If $ \lambda > \lambda_c $, the equation (\ref{one}) admits at least two solutions on $ G $.
    \end{thm}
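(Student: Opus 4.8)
\textbf{Proof proposal for Theorem \ref{thma}.}

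The plan is to study the regularized functional obtained after removing the singular part. Write $u = u_0 + v$, where $u_0$ solves $\Delta u_0 = 4\pi\sum_j \delta_{p_j} - \frac{4\pi M}{|V|}$ (this exists on a connected finite graph since the right-hand side has zero mean, and $|V| = \int_V d\mu$ when $\mu\equiv 1$, or more generally the total measure). Then \eqref{one} becomes a regular equation $\Delta v = -\lambda e^{F(u_0+v)}[e^{F(u_0+v)}-1]^2 + \frac{4\pi M}{|V|}$ on $G$, whose weak form is the Euler--Lagrange equation of a $C^1$ functional $J_\lambda$ on the finite-dimensional space $W^{1,2}(V)$. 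For part (i), I would first establish a necessary condition by integrating the equation over $V$: since $\int_V \Delta u\, d\mu = 0$, we get $\lambda \int_V e^{F(u)}[e^{F(u)}-1]^2 d\mu = 4\pi M$, and using $0 \le e^{F(u)}[e^{F(u)}-1]^2$ with the maximum of $t(t-1)^2$ over the relevant range of $t = e^{F(u)} \in (0,1]$ being $\tfrac{4}{27}$ (attained at $t = \tfrac13$), one obtains $\lambda \cdot \tfrac{4}{27}|V| \ge 4\pi M$, i.e. $\lambda \ge \tfrac{27\pi M}{|V|}$, giving the lower bound on $\lambda_c$ and the nonexistence for small $\lambda$. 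The existence for large $\lambda$ I would get by the direct method: show $J_\lambda$ is coercive and bounded below on $W^{1,2}(V)$ for $\lambda$ large (the nonlinear term, suitably integrated, dominates; here the finite-dimensionality and Lemma \ref{le1} make the minimizing sequence convergent), so a minimizer exists. Then define $\lambda_c = \inf\{\lambda : \eqref{one} \text{ has a solution}\}$ and use a monotonicity/sub-supersolution argument to show the solution set of $\lambda$'s is an interval $[\lambda_c,\infty)$; the endpoint case $\lambda = \lambda_c$ is recovered by taking a sequence $\lambda_k \downarrow \lambda_c$ with solutions $u_{\lambda_k}$, extracting a convergent subsequence via Lemma \ref{le1} after establishing uniform bounds, and passing to the limit — here the finiteness of the graph is what makes the compactness and the uniform estimates clean.

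For part (ii), I would construct the monotone family iteratively. Given the solution $u_{\lambda_c}$ from part (i), I would use it (or a suitable modification) as a subsolution at each $\lambda > \lambda_c$, since enlarging $\lambda$ only makes the right-hand side more negative where $F(u) < 0$; combined with a constant supersolution (e.g. a large positive constant, on which $F \equiv 0$ so $\Delta(\text{const}) = 0 \ge$ RHS), the method of monotone iteration produces a solution $\tilde u_\lambda$ depending monotonically on $\lambda$. More precisely, I would run a monotone iteration scheme $\Delta w_{k+1} - K w_{k+1} = (\text{RHS at } w_k) - K w_k$ for a large constant $K$ chosen so the map is order-preserving, started from the supersolution; the resulting fixed point is the maximal solution below the supersolution, and maximal solutions are automatically monotone in $\lambda$ because increasing $\lambda$ decreases the RHS pointwise (where active) and hence pushes the iteration down. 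This gives $\tilde u_{\lambda_1} \ge \tilde u_{\lambda_2}$ for $\lambda_1 > \lambda_2$, possibly after checking signs carefully through the piecewise definition of $F$.

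For part (iii), with $\lambda > \lambda_c$ strictly, I would apply the Mountain Pass theorem (Lemma \ref{le2}) to $J_\lambda$, with the monotone solution $\tilde u_\lambda$ from part (ii) playing the role of a strict local minimum that is \emph{not} a global minimum. Concretely, I would shift coordinates so that $\tilde u_\lambda$ sits at the origin, verify that $J_\lambda$ has a local min there with $J_\lambda$ strictly larger on a small sphere $\|v\| = r$ (using that $\lambda > \lambda_c$ makes the second variation positive definite, or at least that $\tilde u_\lambda$ is a strict local min by the maximality/stability from the monotone construction), and exhibit a point $e$ far away with $J_\lambda(e) \le J_\lambda(0)$ (push $v \to -\infty$ on one vertex, or more simply use that $J_\lambda$ is unbounded below in suitable directions — one must check this, since coercivity held only in the global sense; in fact for $\lambda > \lambda_c$ one typically has the functional unbounded below along rays where $u \to +\infty$ making the nonlinear term vanish while $u_0$-type linear terms go to $-\infty$). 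The Palais--Smale condition I would verify directly: a PS sequence $\{v_k\}$ has $J_\lambda(v_k)$ bounded and $J_\lambda'(v_k) \to 0$; testing against $v_k$ and against constants, and using that on the finite graph all norms are equivalent and the nonlinearity is continuous, forces $\{v_k\}$ bounded in $W^{1,2}(V)$, whence Lemma \ref{le1} gives a convergent subsequence. Then Lemma \ref{le2} yields a critical point at the mountain pass level $c > J_\lambda(0)$, which is a second solution distinct from $\tilde u_\lambda$.

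\textbf{Main obstacle.} The delicate step is the analysis near $\lambda = \lambda_c$ and the verification that the mountain-pass geometry genuinely holds for $J_\lambda$ — in particular establishing that $\tilde u_\lambda$ is a \emph{strict} local minimum and that $J_\lambda$ is unbounded below in some direction when $\lambda > \lambda_c$, since the piecewise structure of $F$ (with $F \equiv 0$ on $u > 0$) means the nonlinear term degenerates on the positive side and one must track carefully where the relevant competitors live. The compactness issues that are usually the heart of such arguments on $\mathbb{R}^n$ are, by contrast, essentially free here thanks to Lemma \ref{le1}.
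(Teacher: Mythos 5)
The central gap is in your existence argument for large $\lambda$ in part (i): the claim that $J_\lambda$ is coercive and bounded below on $W^{1,2}(V)$ is false. Since $0\le [e^{F(u_0+v)}-1]^4\le 1$, the nonlinear term is bounded by $\frac{\lambda}{4}|V|$ for every $v$, while along the ray of negative constants $v\equiv -Q$ the linear term $\frac{4\pi M}{|V|}\int_V v=-4\pi MQ\to-\infty$; hence $J_\lambda(v)\to-\infty$ in that direction for every $\lambda$, and a global minimizer never exists (you in fact rely on exactly this unboundedness below in your part (iii), so your parts (i) and (iii) contradict each other). The idea you are missing is the paper's Lemma \ref{im}: given a subsolution $\underline{v}$ (a large negative constant works once $\lambda$ is large), minimize $J$ only over the constrained set $\mathcal{A}=\{v\ge \underline{v}\}$, where the linear term is bounded below, and then show by the one-sided variation $v^{\tau}=v^*+\tau\psi+\psi^{\tau}$, the subsolution inequality, and a splitting of $V$ into the contact set $\{v^*=\underline{v}\}$ and its complement, that the constrained minimizer is a genuine critical point. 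Your monotone iteration in part (ii) (essentially the sub/supersolution scheme of Hou--Sun, with supersolution $0$ and subsolution $u_{\lambda_c}$) could be used to repair the existence statement, but that is a genuinely different, non-variational route; the paper deliberately stays variational and obtains the monotone family $\tilde u_\lambda$ instead by repeatedly applying Lemma \ref{im} on dyadic grids in $\lambda$ and passing to diagonal subsequences, precisely because the constrained-minimization lemma does not give monotonicity directly.

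Two further steps are asserted rather than proved. At $\lambda=\lambda_c$ you appeal to ``uniform bounds'' and compactness, but the real issue is to exclude $u_{\lambda_n}(x)\to-\infty$ at some or all vertices; the paper rules out divergence everywhere by summing the equation over $V$, and divergence on a proper subset by evaluating $\Delta u_{\lambda_n}$ at a vertex of $V_2$ adjacent to $V_1$ and using connectedness --- the same path/oscillation idea is also what makes the Palais--Smale boundedness work in Lemma \ref{lemm}, not merely equivalence of norms on a finite-dimensional space. Finally, in part (iii) you assume $\tilde u_\lambda$ is a \emph{strict} local minimum; this is not available for every $\lambda>\lambda_c$ (the paper proves strictness only for almost every $\lambda$, in Theorem \ref{thmc}), so the mountain-pass geometry has to be reached through the paper's dichotomy: either $\inf_{\|v-v_\lambda\|=\rho}J=J(v_\lambda)$ for all small $\rho$, which already yields a one-parameter family of additional solutions, or the infimum is strictly larger for some radius, and then Lemma \ref{le2} applies.
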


Let $u_0$ be a solution of the Poisson equation
\begin{eqnarray}\label{k1}
\Delta u_0=-\frac{4\pi M}{|V|}+4\pi \sum_{j=1}^{M}\delta_{p_j}.
\end{eqnarray}
As is well known, the solution of (\ref{k1}) exists if the integral of the right hand side is equal to 0. Inserting $u=u_0+v$ into the equation \eqref{one} yields
\begin{eqnarray}\label{one2}
\Delta v= -\lambda{e^{F(u_0+v)}[e^{F(u_0+v)}-1]^2}+\frac{4\pi M}{|V|}.
\end{eqnarray}
  By Theorem \ref{thma}, we investigate the existence result of the equation \eqref{one} and show that $\tilde{v}_{\lambda}(=\tilde{u}_{\lambda}-u_0) $ is a local minimum of the functional related to the equation \eqref{one2} when $ \lambda > \lambda_c $. For the solution $\tilde{v}_{\lambda}$, we give a further result as follows.
    \begin{thm}
    	\label{thmc}
The solution $\tilde{v}_{\lambda}$ is a strict local minimum of the functional related to the equation \eqref{one2} for almost everywhere $ \lambda > \lambda_c $.
\end{thm}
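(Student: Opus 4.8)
The plan is to exploit the monotone family $\{\tilde{v}_\lambda\}$ produced in Theorem \ref{thma}(ii) together with an abstract argument showing that a monotone-in-parameter family of local minima must be \emph{strict} local minima for a.e. parameter value. Write $J_\lambda$ for the functional associated to \eqref{one2}, i.e. $J_\lambda(v)=\frac12\int_V|\nabla v|^2\,d\mu+\lambda\int_V G(u_0+v)\,d\mu+\frac{4\pi M}{|V|}\int_V v\,d\mu$, where $G$ is a primitive of $e^{F}[e^F-1]^2$ (chosen so that $J_\lambda\in C^1$ and critical points of $J_\lambda$ are exactly weak solutions of \eqref{one2}). We already know $\tilde v_\lambda$ is a local minimum of $J_\lambda$; the task is to upgrade this to \emph{strict} for a.e.\ $\lambda>\lambda_c$.

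First I would set up the monotonicity of the minimal value. Define $m(\lambda):=J_\lambda(\tilde v_\lambda)$, or better, a localized minimal value $m(\lambda):=\inf\{J_\lambda(v): \|v-\tilde v_\lambda\|_{W^{1,2}}\le \rho\}$ for a fixed small $\rho$, where the infimum is attained at $\tilde v_\lambda$ because it is a local minimum. Using $\partial_\lambda J_\lambda(v)=\int_V G(u_0+v)\,d\mu\ge 0$ (since $G\ge 0$ for the relevant range, $G$ being a primitive of a nonnegative integrand vanishing for large argument), the function $\lambda\mapsto m(\lambda)$ is monotone (nondecreasing) on $(\lambda_c,\infty)$ — here one also uses that the monotone family $\tilde v_\lambda$ keeps the minimizers in the fixed ball for $\lambda$ in a small neighborhood, plus an envelope-type argument. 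A monotone function is differentiable a.e.; the core of the proof is to show that \emph{at every point of differentiability of $m$} (or perhaps every point where an appropriate one-sided derivative condition holds), $\tilde v_\lambda$ is a strict local minimum.

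The mechanism I expect to use for the last step: suppose $\tilde v_\lambda$ is \emph{not} a strict local minimum, so there is a sequence $w_k\to \tilde v_\lambda$, $w_k\ne\tilde v_\lambda$, with $J_\lambda(w_k)=J_\lambda(\tilde v_\lambda)=m(\lambda)$, i.e.\ a nontrivial continuum or sequence of minimizers at level $m(\lambda)$. Then for $\lambda'$ slightly larger, compare $J_{\lambda'}(w_k)$ with $J_{\lambda'}(\tilde v_\lambda)$: the difference is $(\lambda'-\lambda)\big(\int_V G(u_0+w_k)\,d\mu-\int_V G(u_0+\tilde v_\lambda)\,d\mu\big)+o(\lambda'-\lambda)$, and one shows the bracket is strictly negative along the sequence (because $w_k$, lying in the ``flat direction'' of $J_\lambda$, must have strictly smaller $G$-integral — this is where the specific structure of the nonlinearity and the strict convexity/monotonicity of $G$ on the set $\{u_0+v<0\}$ enters, exploiting that \eqref{one2} is on a finite graph so all quantities are finite sums and the maximum principle / Hopf-type lemma from Section 3 applies). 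This forces $m(\lambda')<m(\lambda)+(\text{naive bound})$, producing a kink in $m$ at $\lambda$, so $m$ is non-differentiable there; contradiction for a.e.\ $\lambda$.

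The main obstacle will be the flat-direction analysis: ruling out a degenerate continuum of minimizers requires understanding the second variation of $J_\lambda$ at $\tilde v_\lambda$, i.e.\ the linearized operator $-\Delta - \lambda \big(e^{F}[e^F-1]^2\big)'(u_0+\tilde v_\lambda)$, and showing that either it is positive definite (giving strictness directly) or that its kernel is incompatible with the monotonicity-derived bound on $m$. On a finite graph this operator is just a symmetric matrix, so ``positive definite for a.e.\ $\lambda$'' is plausibly provable by an eigenvalue-crossing argument: the eigenvalues depend real-analytically (or at least continuously and monotonically) on $\lambda$ through $\tilde v_\lambda$, and can vanish only on a measure-zero set. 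I would make the eigenvalue monotonicity precise using the monotonicity $\tilde v_{\lambda_1}\ge \tilde v_{\lambda_2}$ and the sign of the derivative of the coefficient $\big(e^{F}[e^F-1]^2\big)'$, which is the genuinely delicate computation hidden behind the word ``fine estimate'' in the abstract.
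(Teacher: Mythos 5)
Your proposal correctly identifies the two ingredients the paper also relies on (the monotone family $\tilde v_\lambda$ from Lemma \ref{im1} and the second variation $J''(\tilde v_\lambda)$, which on a finite graph is a symmetric matrix), but both mechanisms you propose for converting ``a.e.\ $\lambda$'' into strictness have genuine gaps. In the ``kink in $m(\lambda)$'' argument, the decisive step is your claim that a sequence $w_k\to\tilde v_\lambda$ of equal-level minimizers must satisfy $\int_V G(u_0+w_k)\,d\mu<\int_V G(u_0+\tilde v_\lambda)\,d\mu$; nothing in the structure of \eqref{one2} forces this sign (a flat direction of $J_\lambda$ only says the full functional is unchanged to second order, not that the $G$-part alone decreases), and without it no kink in $m$ is produced. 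In the fallback eigenvalue-crossing argument, the assertion that the eigenvalues of $J''(\tilde v_\lambda)$ ``can vanish only on a measure-zero set'' does not follow from continuity: a continuous eigenvalue branch can vanish on a whole interval, and you cannot invoke real-analyticity in $\lambda$ because $\lambda\mapsto\tilde v_\lambda$ is only known to be monotone (hence possibly discontinuous), not analytic; the suggested eigenvalue monotonicity is also unsupported, since the coefficient $e^{F}(1-3e^{F})$ evaluated at $u_0+\tilde v_\lambda$ has no single sign of variation as $\tilde v_\lambda$ increases.

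The paper's proof closes exactly this gap by differentiating in $\lambda$ the solution itself rather than the value function: monotonicity of $\lambda\mapsto\tilde v_\lambda$ gives, for a.e.\ $\lambda>\lambda_c$, the existence of $\bar v_\lambda=\frac{d\tilde v_\lambda}{d\lambda}\ge 0$ (this is precisely where ``almost every $\lambda$'' enters), and differentiating \eqref{one2} in $\lambda$ yields the identity $\langle J''(\tilde v_\lambda)\bar v_\lambda,\varphi\rangle=\int_V e^{F(u_0+\tilde v_\lambda)}[e^{F(u_0+\tilde v_\lambda)}-1]^2\varphi\,d\mu$ for all $\varphi$. Testing with the first eigenfunction $\theta_1$ of $J''(\tilde v_\lambda)$, which can be taken componentwise nonnegative by the Rayleigh-quotient (Perron--Frobenius type) argument, makes the right-hand side strictly positive, while the left-hand side equals $a_1\mu_1$ with $a_1=\langle\bar v_\lambda,\theta_1\rangle\ge 0$; hence $\mu_1>0$ and $\tilde v_\lambda$ is a strict local minimum. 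So the missing idea in your write-up is the use of $\bar v_\lambda\ge 0$ as a nonnegative ``almost eigenfunction'' pairing against $\theta_1$, which replaces both of your unproven claims; without something of this kind your outline does not yet constitute a proof.
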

\begin{rem}
(i) In \cite{HS}, Hou and Sun gave the same existence result of the equation \eqref{one} as $(i)$ in Theorem \ref{thma} by the method of the subsolutions and supersolutions for $ \lambda > \lambda_c $  and further proved that the solution $\hat{u}_{\lambda}$ is increasing respect to $\lambda$. In this paper, we prove the existence result of \eqref{one} by another method, i.e. the variational method. Since our method is variational, we cannot get the monotonicity of the solution directly. This implies that the solution $\hat{u}_{\lambda}$ (obtained in \cite{HS}) may be different from the solution $u_{\lambda}$ of \eqref{one}.

(ii) Hou and Sun \cite{HS} established the existence result of \eqref{one} at $\lambda=\lambda_c$ by the integral estimation method. In our paper, we apply properties of the equation on the finite graph and the contradiction argument.

(iii) We give not only the existence of the single solution, but also the existence of multiple solutions. To achieve this, we first put forward a new method to construct a solution $\tilde{u}_{\lambda}$ of the equation \eqref{one} which is monotonic with respect to $\lambda$. This implies that $\tilde{v}_{\lambda}$ is monotonic with respect to $\lambda$. Then we establish that $\tilde{v}_{\lambda}$ is a local minimum of the functional related to the equation \eqref{one2} when $ \lambda > \lambda_c $ by the monotonicity of $\tilde{v}_{\lambda}$. With the help of this result and the variational method, we show that the equation (\ref{one}) admits at least two solutions on $ G $ when $ \lambda > \lambda_c $.
\end{rem}

\section{Existence of solutions for the Chern-Simons equations}
    This section devotes to the proof of Theorem \ref{thma}. We establish the existence result of the solutions to the equation \eqref{one} by the variation methods. To achieve this purpose, we first prove some basic lemmas.

\begin{defn}
	We say that	$ \underline{u} $ is a {\bf sub-solution} of (\ref{one}) if
	\begin{equation}
		\label{four}
		\Delta{\underline{u}}\ge -\lambda{e^{F(\underline{u})}[e^{F(\underline{u})}-1]^2}
		+4\pi\sum_{j=1}^{M}{\delta_{p_{j}}}
	\end{equation}
for any $ x \in V $.
\end{defn}

\begin{lem}
	\label{lemb}
	If $ \underline{u} $ is a sub-solution to (\ref{one}), then $ \underline{u} $ is non-positive on $ V $.
\end{lem}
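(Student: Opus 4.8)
The plan is to argue by contradiction using the maximum principle on the finite graph. Suppose $\underline{u}$ attains its maximum over $V$ at a vertex $x_0$, and suppose for contradiction that $\underline{u}(x_0)>0$. Since $x_0$ is a maximizer, $u(y)-u(x_0)\le 0$ for every $y\sim x_0$, and by the definition \eqref{two} of the graph Laplacian this forces $\Delta \underline{u}(x_0)\le 0$. On the other hand, evaluate the sub-solution inequality \eqref{four} at $x_0$. I would split into two cases according to whether $x_0\in\{p_1,\dots,p_M\}$ or not; in either case the Dirac term on the right is $\ge 0$, so it only helps. The key point is the sign of $-\lambda e^{F(\underline{u}(x_0))}\bigl[e^{F(\underline{u}(x_0))}-1\bigr]^2$: since $\underline{u}(x_0)>0$, by the definition \eqref{fu} we have $F(\underline{u}(x_0))=0$, hence $e^{F(\underline{u}(x_0))}-1=0$ and this whole term vanishes. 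Therefore \eqref{four} at $x_0$ reads $\Delta\underline{u}(x_0)\ge 4\pi\sum_j\delta_{p_j}(x_0)\ge 0$.

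Combining with $\Delta\underline{u}(x_0)\le 0$ gives $\Delta\underline{u}(x_0)=0$ and, if $x_0$ is one of the $p_j$, an immediate contradiction since then the right side is $\ge 4\pi>0$. So $x_0\notin\{p_1,\dots,p_M\}$ and $\Delta\underline{u}(x_0)=0$, which by \eqref{two} forces $\sum_{y\sim x_0}\omega_{yx_0}[\underline{u}(y)-\underline{u}(x_0)]=0$; since each summand is $\le 0$ and each weight is positive, we get $\underline{u}(y)=\underline{u}(x_0)$ for every neighbour $y$ of $x_0$. Thus the set of maximizers is closed under taking neighbours; since $G$ is connected, $\underline{u}$ is constant on $V$, equal to some value $c=\underline{u}(x_0)>0$. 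But then $\Delta\underline{u}\equiv 0$, and summing \eqref{four} over all of $V$ against $\mu$ gives $0=\int_V\Delta\underline{u}\,d\mu\ge -\lambda\int_V e^{F(c)}[e^{F(c)}-1]^2\,d\mu+4\pi M=0+4\pi M>0$ (using $F(c)=0$ again), a contradiction. Hence $\underline{u}(x_0)\le 0$, i.e. $\max_V\underline{u}\le 0$, which is the claim.

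I do not expect any serious obstacle here; the argument is a routine discrete maximum-principle computation. The only mild subtlety is making sure the ``constant maximizer'' branch is handled — one cannot conclude a contradiction purely from a local computation at $x_0$ when $x_0\notin\{p_j\}$ and all neighbours share the maximum value, so one genuinely needs connectedness plus the global integral identity (equivalently, the fact that $\int_V\Delta u\,d\mu=0$ for any $u$ on a finite graph) together with the presence of the strictly positive Dirac sources to close the case. If one prefers, the constant case can also be dispatched by first observing that any maximizer $x_0$ must have a neighbour, and iterating the equality $\underline{u}(y)=\underline{u}(x_0)$ until one reaches some $p_j$ (possible since $G$ is connected and the $p_j$ are vertices of $G$), at which point \eqref{four} at that $p_j$ yields $0=\Delta\underline{u}(p_j)\ge 4\pi>0$, a contradiction. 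Either way the proof is short.
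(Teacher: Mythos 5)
Your proof is correct and follows essentially the same route as the paper: both exploit that $F(\underline{u})=0$ wherever $\underline{u}>0$ (so the nonlinear term vanishes and $\underline{u}$ is subharmonic there), apply the discrete maximum principle, and rule out the degenerate case where $\underline{u}>0$ on all of $V$ by summing \eqref{four} over the finite graph, where $\int_V\Delta\underline{u}\,d\mu=0$ clashes with the positive Dirac contribution. The only difference is cosmetic: the paper splits $V$ into $\Omega_1=\{\underline{u}>0\}$ and its complement and cites the maximum principle on graphs, whereas you work at a global maximizer and prove the needed propagation-of-maximum step (via connectedness) by hand, which makes the argument self-contained.
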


\begin{proof}	
	Let
	\begin{equation}
		\label{set}
		\Omega_1=\{x\in V|{\underline{u}}(x)> 0\}, \quad\
		\Omega_2=\{x \in V|{\underline{u}}(x)\le 0\} .
	\end{equation}

If $ \Omega_1 $ is empty, then the conclusion holds. If $ \Omega_1 $ is non-empty, we first claim that $\Omega_1\neq V$. Suppose that the claim does not hold, then
\begin{equation}\label{w}
		\Delta{\underline{u}}\ge 4\pi\sum_{j=1}^{M}{\delta_{p_{j}}} \ \quad \mbox{in} \quad {\Omega}_1.
	\end{equation}
Summing the two sides of the above equation, one can obtain that the left hand side of (\ref{w}) is equal to zero and the right side of (\ref{w}) is equal to $4\pi M$. This leads to a contradiction.

	Since
	\begin{equation*}
		\Delta{\underline{u}}\ge -\lambda{e^{F(\underline{u})}[e^{F(\underline{u})}-1]^2}
		+4\pi\sum_{j=1}^{M}{\delta_{p_{j}}} \quad \mbox{in} \quad {\Omega}_1,
	\end{equation*}
	and from (\ref{fu}), one has
	\begin{equation*}
		\Delta {\underline{u}} \ge 4\pi\sum_{j=1}^{M}{\delta_{p_{j}}}\geq 0 \quad \mbox{in} \quad {\Omega}_1.
	\end{equation*}
	Therefore,  $ \underline{u} $ is a sub-harmonic function in $\Omega_1$. Since $\Omega_1$ and $\Omega_2$ are non-empty, it follows from the maximum principle on graphs that
		\begin{equation*}
			\max\limits_{{\Omega}_1} \underline{u}
			\leq 0.
		\end{equation*}
	
	    This implies that
	    \begin{equation*}
	         \underline{u}(x) \le 0 \quad \mbox{in} \quad {\Omega}_1,
        \end{equation*}
        which contradicts with the definition of $\Omega_1$. Therefore, $ \underline{u} $ is non-positive on $ V $.
	\end{proof}

\begin{rem}\label{rm}
If $ u$ is a solution to (\ref{one}), then $ u $ is non-positive on $ V $.
\end{rem}

   According to (\ref{one}) and Remark \ref{rm}, one can easily see that
    \begin{equation}
    	\label{six}
    	\Delta u \ge -\frac{4}{27} \lambda +4 \pi \sum_{j=1}^{M} \delta_{p_j}.
    \end{equation}
    Summing the two sides of the above equation yields
    \begin{equation}
    	\label{sev}
    	\lambda \ge \frac{27 \pi M}{|V|}.
    \end{equation}
    This is a necessary condition for the existence of solutions to (\ref{one}).

In order to prove the existence of the solutions to the equation \eqref{one}, with the aid of the decomposition $u=u_0+v$ and the equation \eqref{one2}, we only need to consider the existence result of the equation \eqref{one2}.
\begin{lem}\label{im}
If $\underline{v}$ is a subsolution of the equation \eqref{one2}, then there exists a solution $v^*$ of the equation \eqref{one2}.
\end{lem}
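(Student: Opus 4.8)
The plan is to run a monotone iteration (method of sub- and super-solutions) starting from the given subsolution $\underline{v}$, and use the finiteness of the graph together with Lemma \ref{le1} to pass to the limit. First I would construct a supersolution that dominates $\underline{v}$: since $F$ and $e^{F(\cdot)}[e^{F(\cdot)}-1]^2$ are bounded (indeed $F\le 0$ everywhere and $e^{F}[e^F-1]^2\le \tfrac{4}{27}$), a large enough constant $\overline{v}\equiv C$ should satisfy $\Delta\overline{v}=0\le -\lambda e^{F(u_0+C)}[e^{F(u_0+C)}-1]^2+\frac{4\pi M}{|V|}$ once $C$ is large, because $F(u_0+C)=0$ for $C$ large at every vertex, so the right-hand side becomes exactly $\frac{4\pi M}{|V|}>0$; simultaneously enlarging $C$ guarantees $\overline{v}\ge\underline{v}$ pointwise on the finite vertex set $V$. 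Thus I have an ordered pair $\underline{v}\le\overline{v}$.

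Next I would set up the iteration. Rewrite \eqref{one2} as $(\Delta - K)v = -\lambda e^{F(u_0+v)}[e^{F(u_0+v)}-1]^2 + \frac{4\pi M}{|V|} - Kv =: g(v)$ for a constant $K>0$ chosen large enough that the map $v\mapsto -\lambda e^{F(u_0+v)}[e^{F(u_0+v)}-1]^2 - Kv$ is nonincreasing on the relevant range of values (this is possible since the nonlinearity is Lipschitz in $v$ on bounded sets, $V$ being finite). The operator $\Delta - K$ is invertible on the finite-dimensional space of functions on $V$, and by the maximum principle on graphs its inverse is order-preserving (monotone): if $(\Delta-K)w_1\le(\Delta-K)w_2$ then $w_1\ge w_2$. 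Define $v_0=\underline{v}$ and $v_{n+1}=(\Delta-K)^{-1} g(v_n)$. Standard arguments then show $\underline{v}=v_0\le v_1\le v_2\le\cdots\le\overline{v}$: the first inequality $v_0\le v_1$ uses that $\underline{v}$ is a subsolution, and monotonicity of $g$ combined with the order-preserving property of $(\Delta-K)^{-1}$ propagates the ordering; the upper bound by $\overline{v}$ is checked the same way using that $\overline{v}$ is a supersolution.

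Finally, since the sequence $\{v_n\}$ is monotone and bounded pointwise on the finite set $V$, it converges pointwise to some $v^*$ with $\underline{v}\le v^*\le\overline{v}$; because $V$ is finite this is automatically convergence in $W^{1,2}(V)$ (or one invokes Lemma \ref{le1}). Passing to the limit in the recursion $v_{n+1}=(\Delta-K)^{-1}g(v_n)$ — legitimate since $g$ is continuous and everything lives in a finite-dimensional space — yields $v^*=(\Delta-K)^{-1}g(v^*)$, i.e. $(\Delta-K)v^*=g(v^*)$, which is exactly \eqref{one2}. Hence $v^*$ solves \eqref{one2}. The main obstacle, and the step to state carefully, is the choice of $K$ making the shifted nonlinearity monotone together with verifying that the graph maximum principle indeed gives the order-preserving property of $(\Delta-K)^{-1}$; once that is in place the rest is routine bookkeeping, made easy by the finiteness of the graph which trivializes all compactness and convergence issues.
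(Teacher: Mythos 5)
Your proposal is correct, but it takes a genuinely different route from the paper. The paper does not iterate at all: it minimizes the functional $J(v)=\frac12\int_V|\nabla v|^2+\frac{\lambda}{4}\int_V[e^{F(u_0+v)}-1]^4+\frac{4\pi M}{|V|}\int_V v$ over the constraint set $\mathcal{A}=\{v\in W^{1,2}(V)\,|\,v\ge\underline{v}\}$, obtains a minimizer $v^*$ because the linear term forces minimizing sequences to be pointwise bounded (then Lemma \ref{le1} applies), and finally shows by one-sided variations $v^\tau=v^*+\tau\psi+\psi^\tau$ (with $\psi^\tau$ the correction that keeps $v^\tau$ in $\mathcal{A}$) that the constrained minimizer is an unconstrained critical point; the subsolution property of $\underline{v}$ enters exactly in controlling the contact-set terms $I_1,I_2$ where $v^*=\underline{v}$. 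Your argument is instead the classical sub/supersolution monotone iteration with the constant supersolution $\overline{v}\equiv C$ — precisely the method the authors attribute to Hou and Sun \cite{HS} and deliberately avoid. It is sound: your sign conventions match Definition 3.1, $(\Delta-K)^{-1}$ is well defined and inverse-negative by the graph maximum principle, and finiteness of $V$ handles all convergence. The one step you assert rather than verify is the choice of $K$: note that $F$ itself is \emph{not} Lipschitz at $0$ (its derivative $1/(1-e^{F})$ blows up as $u\to0^-$), but the composite $h(u)=e^{F(u)}[e^{F(u)}-1]^2$ satisfies $h'(u)=-e^{F(u)}\bigl(3e^{F(u)}-1\bigr)\in[-2,\tfrac1{12}]$, so any $K\ge 2\lambda$ makes $v\mapsto f(v)-Kv$ nonincreasing; this short computation should be included. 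As for what each approach buys: your iteration is more elementary and constructive, and, like the paper's proof, produces a solution $v^*\ge\underline{v}$, which is all that is needed for the monotone construction in Lemma \ref{im1}; the paper's variational proof additionally exhibits the solution as a (constrained) minimizer of $J$, which is the structural information the authors later lean on when arguing that $\tilde v_\lambda$ is a local minimum of $J$ and running the mountain-pass argument for the second solution in part (iii) of Theorem \ref{thma} — information your construction does not provide by itself.
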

\begin{proof}
Define the functional $ J:W^{1,2}(V) \to \R $ by
\begin{equation}
	\label{k2}
	J(v)= \frac{1}{2} \int_V {|\nabla v |}^2 +\frac{\lambda}{4} \int_V [e^{F(u_0+v)}-1]^4 +\frac{4\pi M}{|V|}\int_V v,
\end{equation}
and let the functional $J$ among all functions $v$ belonging to the set
\begin{eqnarray}
\mathcal{A}:= \{v\in W^{1,2}(V) | v\geq \underline{v} \ a.e. \ \mbox{in} \ V\},
\end{eqnarray}

Denote
\begin{eqnarray*}
J_0=\inf_{\mathcal{A}}J(v).
\end{eqnarray*}
Then there exists a sequence $\{v_n\}\subset \mathcal{A}$ such that $J(v_n)\to J_0$. We claim that $ \{v_{n}  \} $ is bounded in $W^{1,2}(V)$. In fact, we only need to prove that $ \{v_{n} (x) \} $ is bounded for any $x\in V$. Suppose not, then there exists a point $ x^k \in V$ and a subsequence $ \{v_{n_k}   \}\subset \{v_{n}\} $ such that
\begin{eqnarray}
v_{n_k}(x^k) \rightarrow +\infty, \ \ \mbox{as} \ k\to +\infty.
\end{eqnarray}
It follows from (\ref{k2}) that
\begin{eqnarray}
J(v_{n_k}) &=& \frac{1}{2} \int_V {|\nabla v_{n_k} |}^2 +\frac{\lambda}{4} \int_V [e^{F(u_0+v_{n_k})}-1]^4 +\frac{4\pi M}{|V|}\int_V v_{n_k} \\ \nonumber
&\geq& \frac{4\pi M}{|V|}\int_V v_{n_k} \\ \nonumber
&=& \frac{4\pi M}{|V|}\int_{V\backslash \{x^k\}} v_{n_k}+ \frac{4\pi M}{|V|}\mu(x^k)v_{n_k}(x^k) \rightarrow  +\infty, \ \ \mbox{as} \ k\to +\infty.
\end{eqnarray}
This is impossible. Thus, $ \{v_{n}(x)  \} $ is bounded for any $x\in V$, and therefore $ \{v_{n}  \} $ is bounded in $W^{1,2}(V)$. Applying the Sobolev embedding theorem(Lemma \ref{le1}), there exists a subsequence $ \{v_{n_l}   \}\subset \{v_{n}\} $ and $v^*\in W^{1,2}(V)$ such that $v_{n_l}    \to v^*$ as $l \to +\infty$. One can easily see that $v^*\in \mathcal{A}$. Hence, one has
\begin{eqnarray*}
J_0=J(v^*).
\end{eqnarray*}

In the following, we prove that $v^*$ is a critical point for $J$ in $W^{1,2}(V)$.

For any $\psi\in W^{1,2}(V)$, $\tau>0$, let
\begin{eqnarray}
v^\tau=v^*+\tau \psi + {\psi}^{\tau},
\end{eqnarray}
with ${\psi}^{\tau}=\{v^*+\tau \psi-\underline{v}\}_{-}\geq 0 \ \ a.e. \ \mbox{in} \ V$. Then, one has
\begin{eqnarray}
v^\tau \geq \underline{v} \ \ \ \ a.e. \ \mbox{in} \ V.
\end{eqnarray}
This implies that $v^\tau \in \mathcal{A}$. Hence, we have
\begin{eqnarray*}
0 &\leq& \frac{J(v^{\tau})-J(v^*)}{\tau}\\ \nonumber
&=& \frac{1}{2\tau} \int_V ({|\nabla v^{\tau} |}^2-|\nabla v^* |^2 )+\frac{\lambda}{4\tau} \int_V \{[e^{F(u_0+v^{\tau})}-1]^4-[e^{F(u_0+v^{*})}-1]^4\} +\frac{4\pi M}{|V|\tau}\int_V (v^{\tau}- v^*) \\ \nonumber
&=&  \frac{1}{\tau} \int_V (\nabla v^{*}\cdot \nabla (\tau \psi+{\psi}^{\tau}) )+ \frac{1}{2\tau} \int_V{|\nabla (\tau \psi+{\psi}^{\tau}) |}^2 -\lambda\int_V{e^{F(u_0+v^{*})}[e^{F(u_0+v^{*})}-1]^2}\psi\\ \nonumber
&& \ \ +\frac{\lambda}{4\tau} \int_V \left[\{[e^{F(u_0+v^{\tau})}-1]^4-[e^{F(u_0+v^{*})}-1]^4\}+4\tau {e^{F(u_0+v^{*})}[e^{F(u_0+v^{*})}-1]^2}\psi \right]\\ \nonumber
&& \ \ \ +\frac{4\pi M}{|V|}\int_V \psi +\frac{4\pi M}{|V|\tau}\int_V {\psi}^{\tau}.  \\ \nonumber
\end{eqnarray*}
Thus,
\begin{eqnarray*}
&& \int_V \nabla v^{*}\cdot \nabla  \psi -\lambda\int_V {e^{F(u_0+v^{*})}[e^{F(u_0+v^{*})}-1]^2}\psi+\frac{4\pi M}{|V|}\int_V \psi\\ \nonumber
&\geq&-\frac{1}{\tau}\int_V \nabla v^{*}\cdot \nabla  {\psi}^{\tau}-\frac{1}{2\tau} \int_V {|\nabla (\tau \psi+{\psi}^{\tau}) |}^2-\frac{\lambda}{4\tau} \int_V  [e^{F(u_0+v^{\tau})}-1]^4-[e^{F(u_0+v^{*})}-1]^4  \\ \nonumber
&& \ \ +4\tau {e^{F(u_0+v^{*})}[e^{F(u_0+v^{*})}-1]^2}\psi+4 {e^{F(u_0+v^{*})}[e^{F(u_0+v^{*})}-1]^2}{\psi}^{\tau}  \\ \nonumber
&& \ \  +\frac{\lambda}{\tau} \int_V {e^{F(u_0+v^{*})}[e^{F(u_0+v^{*})}-1]^2}{\psi}^{\tau}-\frac{4\pi M}{|V|\tau}\int_V {\psi}^{\tau}.   \nonumber
\end{eqnarray*}
By Taylor expansion, one has
\begin{eqnarray}\label{k4}
&&\!\!\!\!\!\!\!\!\!-\frac{\lambda}{4\tau}\int_V [e^{F(u_0+v^{\tau})}-1]^4 -[e^{F(u_0+v^{*})}-1]^4+4\tau {e^{F(u_0+v^{*})}[e^{F(u_0+v^{*})}-1]^2}\psi \\ \nonumber
&& \ \ \ \ \ +4{e^{F(u_0+v^{*})}[e^{F(u_0+v^{*})}-1]^2}{\psi}^{\tau} =O(\tau).
\end{eqnarray}
It follows from the fact that $\frac{1}{2\tau} \int_V{|\nabla (\tau \psi+{\psi}^{\tau}) |}^2=O(\tau)$ and (\ref{k4}) that
\begin{eqnarray*}
&& \int_V \nabla v^{*}\cdot \nabla  \psi -\lambda\int_V {e^{F(u_0+v^{*})}[e^{F(u_0+v^{*})}-1]^2}\psi+\frac{4\pi M}{|V|}\int_V \psi\\ \nonumber
&&\geq O(\tau)-\frac{1}{\tau}\int_V (\nabla v^*-\nabla \underline{v})\cdot \nabla  {\psi}^{\tau} -\frac{1}{\tau}\int_V \nabla \underline{v}\cdot \nabla  {\psi}^{\tau} +\frac{\lambda}{\tau}\int_V {e^{F(u_0+\underline{v})}[e^{F(u_0+\underline{v})}-1]^2}{\psi}^{\tau} \\ \nonumber
&&\ \ \  \ \ \ \ \ \ \ -\frac{4\pi M}{|V|\tau}\int_V {\psi}^{\tau} +\frac{\lambda}{\tau}\int_V \left\{{e^{F(u_0+v^{*})}[e^{F(u_0+v^{*})}-1]^2}-{e^{F(u_0+\underline{v})}[e^{F(u_0+\underline{v})}-1]^2}\right\}{\psi}^{\tau}\\ \nonumber
&&\geq O(\tau)-\frac{1}{\tau}\int_V (\nabla v^*-\nabla \underline{v})\cdot \nabla  {\psi}^{\tau}+\frac{\lambda}{\tau}\int_V \left\{{e^{F(u_0+v^{*})}[e^{F(u_0+v^{*})}-1]^2}-{e^{F(u_0+\underline{v})}[e^{F(u_0+\underline{v})}-1]^2}\right\}{\psi}^{\tau}, \\
&& := O(\tau)+I_1 +I_2,
\end{eqnarray*}
where the last inequality follows by the fact that $\underline{v}$ is the subsolution of the equation \eqref{one}.

Denote
\begin{eqnarray*}
V_0=\{x\in V |v^{*}(x)=\underline{v}(x)\} \ \ \ \mbox{and} \ \ \ V_1=\{x\in V |v^*(x)>\underline{v}(x) \}.
\end{eqnarray*}
One can see that ${\psi}^{\tau}(x)=0$ for sufficiently small $\tau$ when $x\in V_1$. Thus, one has
\begin{eqnarray}
I_1&=& -\frac{1}{\tau}\int_V (\nabla v^*-\nabla \underline{v})\cdot \nabla  {\psi}^{\tau} \\ \nonumber
&=&-\frac{1}{\tau }\sum_{x \in V} \sum_{y\sim{x}}
    	\omega_{xy} {[( v^*- \underline{v})(y)-( v^*- \underline{v})(x)]}( {\psi}^{\tau}(y)- {\psi}^{\tau}(x))\\ \nonumber
&=&-\frac{1}{\tau }\sum_{x \in V_0}\left\{ [ \sum_{y\sim{x}, y\in V_0}+\sum_{y\sim{x}, y\in V_1}]
    	\omega_{xy} {[( v^*- \underline{v})(y)-( v^*- \underline{v})(x)]}( {\psi}^{\tau}(y)- {\psi}^{\tau}(x))\right\}\\ \nonumber
   && \ \ \ \ -\frac{1}{\tau }\sum_{x \in V_1}\left\{ [ \sum_{y\sim{x}, y\in V_0}+\sum_{y\sim{x}, y\in V_1}]
    	\omega_{xy} {[( v^*- \underline{v})(y)-( v^*- \underline{v})(x)]}( {\psi}^{\tau}(y)- {\psi}^{\tau}(x))\right\}\\ \nonumber
    &=&-\frac{1}{\tau }\sum_{x \in V_0}\sum_{y\sim{x}, y\in V_1}
    	\omega_{xy} {[( v^*- \underline{v})(y)-( v^*- \underline{v})(x)]}( {\psi}^{\tau}(y)- {\psi}^{\tau}(x))\\ \nonumber
   && \ \ \ \ -\frac{1}{\tau }\sum_{x \in V_1}\sum_{y\sim{x}, y\in V_0}
    	\omega_{xy} {[( v^*- \underline{v})(y)-( v^*- \underline{v})(x)]}( {\psi}^{\tau}(y)- {\psi}^{\tau}(x))\\ \nonumber
&\geq& 0
\end{eqnarray}
for sufficiently small $\tau$.

Similarly, one can obtain that
\begin{eqnarray}
I_2=\frac{\lambda}{\tau}\int_V \left\{{e^{F(u_0+v^{*})}[e^{F(u_0+v^{*})}-1]^2}-{e^{F(u_0+\underline{v})}[e^{F(u_0+\underline{v})}-1]^2}\right\}{\psi}^{\tau}=0.
\end{eqnarray}
Therefore,
\begin{eqnarray}
\int_V \nabla v^{*}\cdot \nabla  \psi -\lambda\int_V {e^{F(u_0+v^{*})}[e^{F(u_0+v^{*})}-1]^2}\psi+\frac{4\pi M}{|V|}\int_V \psi\geq 0\ \  \mbox{as} \ \tau \to 0^+.
\end{eqnarray}
This implies that $<J'(v^*), \psi> \ \geq 0$ for any $\psi \in W^{1,2}(V)$. Replacing $\psi$ with $-\psi$ we obtain the reverse inequality, i.e. $<J'(v^*), \psi> \ \leq 0$. Hence, we obtain that $v^*$ is a critical point for $J$ in $W^{1,2}(V)$.
\end{proof}

  \begin{lem}
    	If $ \lambda>0 $ is sufficiently large, then (\ref{one}) has a solution on $ G $.
    \end{lem}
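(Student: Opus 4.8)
The plan is to produce an explicit subsolution of \eqref{one2} whenever $\lambda$ is large enough and then quote Lemma \ref{im}. Let $u_0$ solve the Poisson equation \eqref{k1} (which is solvable since the right-hand side integrates to zero over $V$). As $V$ is finite, $\bar c := \max_{x\in V}u_0(x)$ is a finite number; I would fix any constant $c>\bar c$ and take the constant trial function $\underline v\equiv -c$, so that $\Delta\underline v\equiv 0$ and $u_0+\underline v<0$ everywhere on $V$.

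Since the argument $u_0(x)-c$ is strictly negative for every $x\in V$, one has $F(u_0(x)-c)=\widetilde F(u_0(x)-c)$. From the defining relation $t=1+\widetilde F(t)-e^{\widetilde F(t)}$, together with the elementary fact that $s\mapsto 1+s-e^{s}$ is strictly increasing on $(-\infty,0)$ and vanishes at $s=0$, it follows that $\widetilde F(t)<0$ for all $t<0$, hence $0<e^{F(u_0(x)-c)}<1$ and in particular
\begin{equation*}
a(x):=e^{F(u_0(x)-c)}\bigl[e^{F(u_0(x)-c)}-1\bigr]^{2}>0\qquad\text{for every }x\in V.
\end{equation*}
Because $V$ is finite, $m:=\min_{x\in V}a(x)>0$, a positive constant depending only on $G$ (once $c$ is fixed). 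Now if $\lambda\ge 4\pi M/(|V|\,m)$, then $\lambda\,a(x)\ge 4\pi M/|V|$ for every $x$, so
\begin{equation*}
\Delta\underline v=0\ \ge\ -\lambda\,e^{F(u_0+\underline v)}\bigl[e^{F(u_0+\underline v)}-1\bigr]^{2}+\frac{4\pi M}{|V|}\quad\text{on }V,
\end{equation*}
i.e. $\underline v$ is a subsolution of \eqref{one2}. By Lemma \ref{im} there is a solution $v^{*}$ of \eqref{one2}, and then $u_\lambda:=u_0+v^{*}$ solves \eqref{one} on $G$; this gives the claim with ``$\lambda$ sufficiently large'' quantified as $\lambda\ge 4\pi M/(|V|\,m)$.

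The only delicate point — and the one I would highlight as the crux — is ensuring the nonlinearity $e^{F(\cdot)}[e^{F(\cdot)}-1]^{2}$ stays bounded away from $0$ on $V$. A bare constant would not do this in general, because $e^{F}-1$ vanishes wherever the argument is nonnegative; shifting $u_0$ downward by more than its maximum forces the argument to be strictly negative at every vertex, and finiteness of $V$ then upgrades pointwise positivity to a uniform lower bound $m>0$. Everything else is the monotonicity analysis of the map defining $\widetilde F$ and a direct invocation of Lemma \ref{im}.
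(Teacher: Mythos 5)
Your proposal is correct and follows essentially the same route as the paper: a constant trial function is shown to be a subsolution once $\lambda$ is large enough, and Lemma \ref{im} then yields a solution. The only cosmetic difference is that you build the constant subsolution for \eqref{one2} directly (choosing $c>\max_V u_0$ so the nonlinearity has a uniform positive lower bound $m$, giving the explicit threshold $\lambda\ge 4\pi M/(|V|m)$), whereas the paper takes $\underline{u}=-c$ as a subsolution of \eqref{one} and passes to \eqref{one2} implicitly via $v=u-u_0$; the two are equivalent.
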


    \begin{proof}
    	Choose $ {\underline{u}}=-c $ to be a constant function.
    	Then
    	\begin{equation}
    		\Delta \underline{u} =0.
    	\end{equation}
        If $ \lambda $ is sufficient large, one has
    	\begin{equation}
    		{- \lambda}{{e^{F(\underline{u})}} {[{e^{F(\underline{u})}}-1]^2} }
    		+4 \pi \sum_{j=1}^{M} {\delta_{p_j}}
    		\le 0.
    	\end{equation}
    	Therefore,
    	\begin{equation}
    		\Delta \underline{u} \ge {- \lambda}{{e^{F(\underline{u})}} {[{e^{F(\underline{u})}}-1]^2} }
    		+4 \pi \sum_{j=1}^{M} {\delta_{p_j}}.
    	\end{equation}
      This implies that $ \underline{u} $ is a subsolution of (\ref{one}). It follows from Lemma \ref{im} that the equation \eqref{one} has a solution on $G$ for $\lambda$ sufficiently large.

   This completes the proof of the lemma.
    \end{proof}

\begin{lem}
	\label{lem5.1}
	There is a critical value $ \lambda_c $ depending on $ G $ satisfying
    	\begin{equation}
    		\lambda_c\ge \frac{27\pi M}{|V|}
    	\end{equation}
    such that when $ \lambda > \lambda_c $, the equation (\ref{one}) has a solution $ u_\lambda $.
\end{lem}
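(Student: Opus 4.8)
The plan is to take $\lambda_c$ to be the infimum of the set of parameters for which \eqref{one} is solvable, and then to show that this set contains the whole interval $(\lambda_c,+\infty)$. The key mechanism is that a solution at one value of $\lambda$ serves as a sub-solution at every larger value, after which Lemma \ref{im} produces the desired solution.

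First I would introduce
\[
\Lambda := \bigl\{\lambda>0 : \text{the equation \eqref{one} has a solution on } G\bigr\}.
\]
By the preceding lemma, $\Lambda$ contains all sufficiently large $\lambda$, so $\Lambda\neq\emptyset$; by the necessary condition \eqref{sev}, every $\lambda\in\Lambda$ obeys $\lambda\ge \frac{27\pi M}{|V|}$, so $\Lambda$ is bounded below. Hence $\lambda_c:=\inf\Lambda$ is well defined, satisfies $\lambda_c\ge\frac{27\pi M}{|V|}$, and depends only on $G$ (through $\mu$, the weights, $|V|$ and the points $p_j$). By the definition of the infimum, \eqref{one} has no solution for $\lambda<\lambda_c$, which justifies calling $\lambda_c$ a critical value.

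Next I would prove that $\Lambda$ is upward closed, i.e. $\lambda_0\in\Lambda$ and $\lambda>\lambda_0$ imply $\lambda\in\Lambda$. Let $u_{\lambda_0}$ be a solution of \eqref{one} at the parameter $\lambda_0$. By Remark \ref{rm} one has $u_{\lambda_0}\le 0$ on $V$, and from \eqref{fu} the quantity $e^{F(u_{\lambda_0})}[e^{F(u_{\lambda_0})}-1]^2$ is nonnegative; therefore, for $\lambda>\lambda_0$,
\[
\Delta u_{\lambda_0}=-\lambda_0 e^{F(u_{\lambda_0})}[e^{F(u_{\lambda_0})}-1]^2+4\pi\sum_{j=1}^{M}\delta_{p_j}\ \ge\ -\lambda e^{F(u_{\lambda_0})}[e^{F(u_{\lambda_0})}-1]^2+4\pi\sum_{j=1}^{M}\delta_{p_j},
\]
so $u_{\lambda_0}$ is a sub-solution of \eqref{one} at the parameter $\lambda$. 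Setting $\underline{v}:=u_{\lambda_0}-u_0$ and subtracting the Poisson equation \eqref{k1} shows that $\underline{v}$ is a sub-solution of \eqref{one2} at $\lambda$; by Lemma \ref{im}, \eqref{one2}, and hence \eqref{one}, has a solution $u_\lambda$ at $\lambda$, so $\lambda\in\Lambda$.

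Combining the two steps gives $(\lambda_c,+\infty)\subset\Lambda$, which is precisely the assertion of the lemma. The only delicate point is the sign argument that turns a solution at $\lambda_0$ into a sub-solution at a larger $\lambda$; it rests entirely on the non-positivity in Remark \ref{rm} together with $e^{F(\cdot)}[e^{F(\cdot)}-1]^2\ge 0$, and once that is in place the remaining steps are routine bookkeeping around the shift $u=u_0+v$.
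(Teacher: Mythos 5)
Your proposal is correct and follows essentially the same route as the paper: define $\Lambda$ as the solvable set, use a solution at a smaller parameter as a sub-solution (after the shift $u=u_0+v$) for any larger parameter, invoke Lemma \ref{im} to conclude $\Lambda$ contains a half-line, and take $\lambda_c=\inf\Lambda$ with the lower bound coming from \eqref{sev}. The only cosmetic remark is that the inequality $e^{F(u)}[e^{F(u)}-1]^2\ge 0$ holds for every $u$ (exponential times a square), so the appeal to Remark \ref{rm} is not actually needed there.
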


\begin{proof}
	Denote
	\begin{equation}
		\Lambda =\{ \lambda >0 | \lambda \  \mbox{is  such  that  (\ref{one})  has  a  solution} \}.
	\end{equation}
 We show that $ \Lambda $ is an interval.
    If $ \hat{\lambda} \in \Lambda $, denote by $ \hat{u}=u_0+\hat{v} $ a solution of (\ref{one}) at $ \lambda = \hat{\lambda}$, where $\hat{v}$ is the corresponding solution of the equation \eqref{one2}. If $ \lambda \ge \hat{\lambda}, $ one has
    \begin{align}
    	\Delta {\hat{v}}
    	& = {-\hat{\lambda}}
    	 {e^{F({u_0+\hat{v}})}}{[{e^{F({u_0+\hat{v}})}}-1]^2}
    	 +\frac{4 \pi M}{|V|} \\ \nonumber
    	& \ge {-\lambda}
    	{e^{F({u_0+\hat{v}})}}{[{e^{F({u_0+\hat{v}})}}-1]^2}
    	+\frac{4 \pi M}{|V|}. \nonumber
    \end{align}
    This implies that $ {\hat{v}} $ is a subsolution of the equation (\ref{one2}) on $ G $  for any  $ \lambda \geq \hat{\lambda} $. Then by Lemma \ref{im}, one can obtain the existence of the solutions $v_{\lambda}$ to the equation (\ref{one2}) for any  $ \lambda \geq \hat{\lambda} $. Thus, the equation \eqref{one} has a solution for any  $ \lambda \geq \hat{\lambda} $. It implies that
    \begin{equation*}
    	[\hat{\lambda} ,+ \infty) \subset \Lambda.
    \end{equation*}

Set
\begin{equation}
	{\lambda}_c = \mbox{inf}\{ \lambda | \lambda \in \Lambda \}.
\end{equation}
It follows from (\ref{sev}) that
\begin{equation*}
	\lambda \ge \frac{27 \pi M}{|V|} \ \ \mbox{for any }\lambda >\lambda_c.
\end{equation*}
Taking the limit, one can arrive at
\begin{equation*}
	{\lambda}_c \ge \frac{27 \pi M}{|V|}.
\end{equation*} \end{proof}

In the following, we show that the existence of the solutions to the equation (\ref{one}) on $G$ if $ \lambda =\lambda_c . $
\begin{lem}
	\label{lem5}
	If $ \lambda =\lambda_c $, then (\ref{one}) has a solution $ u_\lambda $.
\end{lem}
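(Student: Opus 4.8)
The plan is to approach $\lambda_c$ from above and use compactness of the finite graph. First, by Lemma~\ref{lem5.1} and its proof the solvability set $\Lambda$ contains $(\lambda_c,+\infty)$, so I would pick $\lambda_n>\lambda_c$ with $\lambda_n\downarrow\lambda_c$ and, for each $n$, a solution $u_n=u_0+v_n$ of \eqref{one} at $\lambda=\lambda_n$. Writing $g(t):=e^{F(t)}[e^{F(t)}-1]^2$, each $u_n$ satisfies $\Delta u_n=-\lambda_n g(u_n)+4\pi\sum_{j=1}^M\delta_{p_j}$ on $V$. Since $W^{1,2}(V)$ is finite dimensional (Lemma~\ref{le1}), it suffices to prove that $\{u_n\}$ is uniformly bounded on $V$; then, up to a subsequence, $u_n\to u$, and because $\Delta$ and $g$ are continuous and $\lambda_n\to\lambda_c$, passing to the limit in the equation gives $\Delta u=-\lambda_c g(u)+4\pi\sum_{j=1}^M\delta_{p_j}$, i.e.\ $u=u_{\lambda_c}$ is the desired solution.

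The upper bound is immediate: by Remark~\ref{rm}, $u_n\le 0$ on $V$, hence $v_n=u_n-u_0\le\max_V(-u_0)$. So the crux is a uniform lower bound, and I would argue by contradiction. Suppose it fails; passing to a subsequence and using that $V$ is finite, there is a vertex $x_0$ with $m_n:=u_n(x_0)=\min_V u_n\to-\infty$. Set $w_n:=u_n-m_n\ge 0$, so that $w_n(x_0)=0$ and $\Delta w_n=\Delta u_n$. Because $0\le g\le\tfrac{4}{27}$ (the elementary bound behind \eqref{six}) and $\sum_j\delta_{p_j}$ is a fixed bounded function, $\Delta w_n$ is bounded in $L^\infty(V)$ uniformly in $n$. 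On a connected finite graph, a nonnegative function vanishing at one vertex and having uniformly bounded Laplacian is uniformly bounded: using $\Delta w_n(x)=\tfrac{1}{\mu(x)}\sum_{y\sim x}\omega_{xy}(w_n(y)-w_n(x))$ together with $w_n\ge 0$, one first bounds $w_n$ on every neighbour of $x_0$, and then propagates the bound along $G$ by connectedness in finitely many steps, with constants depending only on $G$. Hence $u_n=m_n+w_n\to-\infty$ uniformly on $V$, and since $F(t)\to-\infty$ (so $e^{F(t)}\to 0$) as $t\to-\infty$, by \eqref{fu} and the relation $u=1+\widetilde{F}(u)-e^{\widetilde{F}(u)}$, it follows that $g(u_n)\to 0$ uniformly. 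But summing the equation over $V$ gives $\int_V g(u_n)\,d\mu=\tfrac{4\pi M}{\lambda_n}\to\tfrac{4\pi M}{\lambda_c}>0$, a contradiction.

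Therefore $\{u_n\}$ is uniformly bounded, hence bounded in $W^{1,2}(V)$, and the limiting argument of the first paragraph produces a solution of \eqref{one} at $\lambda=\lambda_c$. I expect the uniform lower bound in the second paragraph to be the only real obstacle: the sub-/supersolution construction of Lemma~\ref{im} used for $\lambda>\lambda_c$ is not available here, since a solution at $\widehat\lambda>\lambda_c$ is a \emph{supersolution} (not a subsolution) of \eqref{one} at $\lambda_c$ because $g\ge 0$. Instead one must combine the boundedness and decay of the nonlinearity $g$, the explicit form of the graph Laplacian, connectedness of $G$, and the integral identity obtained from summing the equation.
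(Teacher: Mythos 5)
Your proposal is correct and follows essentially the same route as the paper: take solutions $u_{\lambda_n}$ of \eqref{one} with $\lambda_n\downarrow\lambda_c$, use the finiteness and connectedness of $G$ together with the uniform bounds $u_{\lambda_n}\le 0$ and $0\le e^{F}\bigl(e^{F}-1\bigr)^2\le \tfrac{4}{27}$ to rule out divergence to $-\infty$, and pass to the limit in the finite-dimensional space $W^{1,2}(V)$. The only organizational difference is that where the paper splits $V$ into the blow-down set $V_1$ and the bounded set $V_2$ and reaches a contradiction from $\Delta u_{\lambda_n}$ at a vertex of $V_2$ adjacent to $V_1$, you derive a uniform oscillation bound by propagating the uniformly bounded Laplacian from the minimum vertex and then contradict the summed identity $\lambda_n\int_V e^{F(u_{\lambda_n})}[e^{F(u_{\lambda_n})}-1]^2\,d\mu=4\pi M$ --- the same two ingredients the paper uses, assembled in a slightly different order.
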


\begin{proof}
Choose a sequence ${\lambda}_n \to {\lambda}_c$ as $n\to +\infty$. After passing to a subsequence of $\{\lambda_n\}$, we may assume that $u_{\lambda_n}(x)$ converges to a limit point in $[-\infty,0]$ for any $x\in V$, and we denote this limit by $u(x)$. We prove that $u(x)\in(-\infty,0]$ for any $x\in V$. Suppose not. Then we have two cases:\\
\emph{Case (i)}.
\begin{eqnarray}	
 {\lim_{n \to \infty}}{u_{\lambda_n}}(x)=- \infty  \ \mbox{ for any } x \in V.
 \end{eqnarray}
\emph{Case (ii)}.
There exists $ x \in V$ such that ${\lim_{n \to \infty}}{u_{\lambda_n}}(x)$ exists in $(-\infty,0]$.

If \emph{Case (i)} holds, when ${\lambda_n \to \lambda_c}$, summing the two sides of (\ref{one}) yields that the left hand side of (\ref{one}) approaches $0$ and the right hand side remains at least $2\pi$. This leads to a contradiction.

If \emph{Case (ii)} holds, we split $ V $ into two subsets $ V_1 $ and $ V_2 $, where
	\begin{align}\label{V}
		V_1&=\left\{ x\in V\Big| \lim_{n \to \infty}{u_{\lambda_n}}(x) = - \infty \right\}\nonumber\\
        V_2&=\left\{ x\in V\Big| \lim_{n \to \infty}{u_{\lambda_n}}(x) \   \mbox{exists in }(-\infty,0] \right\}.
	\end{align}
	
	If $ V_1 $ is empty, then Lemma \ref{lem5} holds. In the following, we consider that both $ V_1 $ and $ V_2 $ are non-empty sets.
	
	Since $ G $ is a connected finite graph, one may choose $ x_2 \in V_2 $ such that there exists $x_1\in V_1$ satisfying $x_1\sim x_2$. Then
	\begin{align}
		\Delta {u_{\lambda_n}}(x_2)
		&= \frac{1}{ \mu (x_2)} \sum_{y\sim{x_2}} \omega_{y{x_2}}
		[{u_{\lambda_n}} (y) -{u_{\lambda_n}}(x_2) ] \\ \nonumber
		&= \frac{1}{ \mu (x_2)} \sum_{y\sim{x_2}, y \in V_1} \omega_{y{x_2}}
		[{u_{\lambda_n}} (y) -{u_{\lambda_n}}(x_2) ]
		+  \frac{1}{ \mu (x_2)} \sum_{y\sim{x_2}, y \in V_2} \omega_{y{x_2}}
		[{u_{\lambda_n}} (y) -{u_{\lambda_n}}(x_2) ]\\ \nonumber
        &:= I_1+I_2.
	\end{align}
Now we calculate $I_1$ and $I_2$ respectively. It follows from (\ref{V}) that
    \begin{equation}
    	I_1=\frac{1}{ \mu (x_2)}\sum_{y\sim{x_2}, y \in V_1} \omega_{y{x_2}}[{u_{\lambda_n}} (y) -{u_{\lambda_n}}(x_2) ] \to - \infty \ \ \mbox{as} \ n \to \infty.
    \end{equation}
   For $I_2$, since $x_2, y \in V_2$, the limit of $ {u_{\lambda_n}} $ as $ {\lambda_n}$ tends to $ \lambda_c $ exists. One can obtain that
    \begin{equation}
    	\frac{1}{ \mu (x_2)}\sum_{y\sim{x_2}, y \in V_2} \omega_{y{x_2}}[{u_{\lambda_n}} (y) -{u_{\lambda_n}}(x_2) ]
    \end{equation}
    is bounded. Taking the limit $ {\lambda_n} \to \lambda_c $, one has
    \begin{equation}
    	\label{12}
    	\Delta {u_{\lambda_n}}(x_2) \to -\infty.
    \end{equation}
    However, it follows from (\ref{one}) that $\Delta {u_{\lambda_n}}(x_2)$ is finite, which is a contradiction with (\ref{12}). This completes the proof of the lemma.
\end{proof}
Combining Lemma \ref{lem5.1} and Lemma \ref{lem5}, one can obtain that the equation (\ref{one}) has a solution $u_{\lambda}$ on $ G $ when $ \lambda \ge \lambda_c $. Hence, \textbf{the statement (i) of Theorem \ref{thma} is proved}.\\

In the following, with the aid of Lemma \ref{im}, we prove that there exists a solution $\tilde{u}_{\lambda}$ to the equation \eqref{one} satisfying $\tilde{u}_{\lambda}$ is increasing with respect to $\lambda$.
\begin{lem} \label{im1}
	There is a critical value $ \lambda_c $ depending on $ G $ satisfying
    	\begin{equation*}
    		\lambda_c\ge \frac{27\pi M}{|V|}
    	\end{equation*}
    such that equation (\ref{one}) has a solution $\tilde{u}_{\lambda}$ on $ G $ for any $ \lambda \geq \lambda_c $, and
    \begin{eqnarray}
    \tilde{u}_{\lambda_1}\geq\tilde{u}_{\lambda_2} \ \ \mbox{when} \ \lambda_1>\lambda_2.
    \end{eqnarray}
\end{lem}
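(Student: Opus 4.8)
The plan is to realize $\tilde u_\lambda$ as the \emph{maximal subsolution} at the parameter $\lambda$, and then to upgrade it to a solution using Lemma \ref{im}. It is convenient to work with the reformulation \eqref{one2} via $u=u_0+v$, keeping in mind that by Lemma \ref{lemb} every subsolution of \eqref{one} is non-positive on $V$, i.e. every subsolution $v$ of \eqref{one2} satisfies $v\le -u_0$ pointwise. For $\lambda\ge\lambda_c$, statement (i) of Theorem \ref{thma} (already proved via Lemmas \ref{lem5.1} and \ref{lem5}) guarantees that \eqref{one2} has a solution, so the set $\mathcal S_\lambda$ of subsolutions of \eqref{one2} is non-empty, and for each $x\in V$ the number $\sup_{v\in\mathcal S_\lambda}v(x)\le -u_0(x)$ is finite. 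I set $\tilde v_\lambda(x):=\sup_{v\in\mathcal S_\lambda}v(x)$ and aim to show $\tilde v_\lambda\in\mathcal S_\lambda$.

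First I would prove that this supremum is attained, i.e. that $\tilde v_\lambda$ is itself a subsolution. Two ingredients are needed. (1) On a finite graph the pointwise maximum of finitely many subsolutions of \eqref{one2} is again a subsolution: at a vertex $x$ where $\max_j v_j=v_i$, one has $\Delta(\max_j v_j)(x)\ge\Delta v_i(x)$ since $\max_j v_j\ge v_i$ everywhere with equality at $x$ (and $\omega_{yx}>0$), while the right-hand side of \eqref{one2} at $x$ depends only on $v(x)$ and $u_0(x)$, so the subsolution inequality at $v_i$ persists. (2) Fixing one solution $\underline v_0\in\mathcal S_\lambda$ and replacing any $v\in\mathcal S_\lambda$ by $\max(v,\underline v_0)\in\mathcal S_\lambda$ does not change the pointwise suprema; hence $\tilde v_\lambda$ is also the supremum over the set $\{v\in\mathcal S_\lambda:\underline v_0\le v\le -u_0\}$, which is compact because the subsolution inequalities are closed (continuity of $F$) and the set is bounded. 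So for each $x$ the supremum is attained by some $v^{(x)}\in\mathcal S_\lambda$, and $\max_{x\in V}v^{(x)}$ is a subsolution equal to $\tilde v_\lambda$.

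Next I would upgrade $\tilde v_\lambda$ to a solution by a short circular argument. Applying Lemma \ref{im} with $\underline v=\tilde v_\lambda$ produces a solution $v^*$ of \eqref{one2} with $v^*\ge\tilde v_\lambda$. But $v^*$ is in particular a subsolution, so $v^*\le\tilde v_\lambda$ by the maximality defining $\tilde v_\lambda$; therefore $v^*=\tilde v_\lambda$ and $\tilde v_\lambda$ solves \eqref{one2}. Setting $\tilde u_\lambda:=u_0+\tilde v_\lambda$ yields a solution of \eqref{one} for every $\lambda\ge\lambda_c$, and the bound $\lambda_c\ge 27\pi M/|V|$ is exactly Lemma \ref{lem5.1}.

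Finally, for monotonicity let $\lambda_1>\lambda_2\ge\lambda_c$. The computation already carried out in the proof of Lemma \ref{lem5.1} shows that the solution $\tilde v_{\lambda_2}$ of \eqref{one2} at $\lambda_2$ is a subsolution of \eqref{one2} at $\lambda_1$, using $e^{F}[e^{F}-1]^2\ge 0$. Hence $\tilde v_{\lambda_2}\in\mathcal S_{\lambda_1}$, so by maximality $\tilde v_{\lambda_2}\le\tilde v_{\lambda_1}$, i.e. $\tilde u_{\lambda_1}\ge\tilde u_{\lambda_2}$. The main obstacle is the second paragraph: because $\mathcal S_\lambda$ is \emph{not} bounded below (e.g. large negative constants are subsolutions for large $\lambda$), one cannot directly extract a maximizing subsolution by compactness — this is why one first truncates from below against a fixed solution $\underline v_0$. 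Once the maximal subsolution is in hand, everything else is an immediate consequence of Lemma \ref{im}, Lemma \ref{lemb}, and estimates already established.
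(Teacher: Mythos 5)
Your argument is correct, but it takes a genuinely different route from the paper. The paper builds the monotone family by discretization: it fixes a solution $u_{\lambda_c}$, partitions $[\lambda_c,\infty)$ into dyadic intervals of length $2^{-m}$, iterates the construction of Lemma \ref{im} (whose minimizer dominates the given subsolution) to produce increasing chains $u_{\lambda,m}$, extracts a diagonal subsequence so that the solutions converge at every dyadic parameter $q_l$, and finally defines $\tilde u_\lambda$ as the monotone limit of $u_{q_l}$ along $q_l\nearrow\lambda$, with monotonicity inherited in the limit. You instead define $\tilde v_\lambda$ as the maximal subsolution of \eqref{one2}: finiteness of the pointwise supremum comes from Lemma \ref{lemb}, attainment from the fact that a finite pointwise maximum of subsolutions is again a subsolution on a graph together with compactness of the truncated set, and the upgrade to a solution is the sandwich $v^*\ge\tilde v_\lambda$ (from the construction in Lemma \ref{im}) versus $v^*\le\tilde v_\lambda$ (maximality). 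Monotonicity in $\lambda$ is then immediate, since a solution at $\lambda_2$ is a subsolution at $\lambda_1>\lambda_2$, exactly the computation in Lemma \ref{lem5.1}. Both proofs rely on the same two engines — Lemma \ref{im} with the implicit conclusion $v^*\ge\underline v$, and the monotonicity of the nonlinearity in $\lambda$ — but your route avoids the dyadic bookkeeping and diagonal argument entirely and produces a canonical (maximal) monotone solution, while the paper's construction is more elementary and does not require identifying an extremal object. One small inaccuracy in your closing remark: for a \emph{fixed} $\lambda$, very negative constants are in general \emph{not} subsolutions of \eqref{one2}, since $e^{F}[e^{F}-1]^2\to 0$ as the argument tends to $-\infty$ while the constant term $4\pi M/|V|$ is positive (the constant subsolution in Lemma 3.4 requires $\lambda$ large depending on the constant); whether $\mathcal S_\lambda$ is bounded below is thus unclear, but this is only a motivational aside — your truncation against a fixed solution makes the compactness step valid regardless, so the proof itself is unaffected.
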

\begin{proof}
Equation (\ref{one}) has a solution $u_{\lambda}$ on $ G $ when $ \lambda \ge \lambda_c $, and we denote the set of solvability parameters by $\tilde{\Lambda}:=[{\lambda}_c, +\infty)$. We fix a solution of equation~(\ref{one}) with parameter $\lambda=\lambda_c$, and denote it by $u_{\lambda_c}$. For any fixed $m\in \mathbb{N}$, let
\begin{eqnarray*}
\tilde{\Lambda}=\bigcup_{k=0}^{\infty}\tilde{\Lambda}_{km},
\end{eqnarray*}
where
\begin{eqnarray*}
\tilde{\Lambda}_{km}:=[{\lambda}_c+\frac{k}{2^m}, {\lambda}_c+\frac{k+1}{2^m}] \ \ (k\in \mathbb{N}).
\end{eqnarray*}
Note that for any $\lambda>\lambda_c$, $u_{\lambda_c}$ is a subsolution of the equation \eqref{one} with parameter $\lambda$. It then follows from the argument of Lemma \ref{im} that for any $\lambda\in({\lambda}_c, {\lambda}_c+\frac{1}{2^m}]$, there exists a solution $u_{\lambda,m}$ of \eqref{one} such that
\begin{eqnarray*}
u_{\lambda,m}\geq u_{{\lambda}_c},\ \ \ \lambda\in \tilde{\Lambda}_{0m} .
\end{eqnarray*}
Now consider the solution $u_{{\lambda}_c+\frac{1}{2^m},m}$. Similarly, since $u_{{\lambda}_c+\frac{1}{2^m},m}$ is a subsolution of equation~\eqref{one} with parameter $\lambda>\lambda_c+\frac1{2^m}$, by the argument of Lemma~\ref{im}, one can find a solution $u_{\lambda,m}$ of the equation~\eqref{one} for any $\lambda\in ({\lambda}_c+\frac{1}{2^m}, {\lambda}_c+\frac{2}{2^m}]$ such that
\begin{eqnarray*}
u_{\lambda,m}\geq u_{{\lambda}_c+\frac{1}{2^m},m},\ \ \ \lambda \in \tilde{\Lambda}_{1m}.
\end{eqnarray*}
Repeating the process above yields that for any $\lambda\in[{\lambda}_c, +\infty)$, there exists a solution $u_{\lambda,m}$ of equation~\eqref{one} with parameter $\lambda$, and if $\lambda\in \tilde{\Lambda}_{km}$ for some $k\in\{0,1,2,...\}$, then
\begin{eqnarray*}
u_{\lambda,m}\geq u_{{\lambda}_c+\frac{k}{2^m},m}.
\end{eqnarray*}
Here, for convenience, we let $u_{\lambda_c,m}=u_{\lambda_c}$. Then we have constructed an increasing sequence
\begin{eqnarray}\label{k5}
u_{\lambda_c,m}\leq u_{\lambda_c+\frac1{2^m},m}\leq u_{\lambda_c+\frac2{2^m},m}\leq\cdots
\end{eqnarray}
for any $m\in\mathbb N$.

Now define
\begin{eqnarray}
Q:=\left\{{\lambda}_c+\frac{j}{2^m}\Big| m\in \mathbb{N}, \ j\in \mathbb{N} \ \mbox{odd}\right\}.
\end{eqnarray}
We rearrange the elements in the countable set $Q$ and list them as $Q=\{q_l:l\in\mathbb N\}$.

For any $q_l\in Q$, we have constructed a sequence of solutions $\{u_{q_l,m}\}_{m\in\mathbb N}$ of equation~\eqref{one} with parameter $\lambda=q_l$. Since the functions $u_{q_l,m}$ $(l,m\in\mathbb N)$ are defined on the finite connected graph $G=(V,E)$ and
\begin{eqnarray*}
 u_{\lambda_c}\leq u_{q_l,m}\leq0\ \ (\forall l,m\in\mathbb N)
\end{eqnarray*}
there exists a subsequence $\{n_i\}\subset \mathbb N$ such that, for any $q_l\in Q$, $\{u_{q_l,n_i}\}$ converges as $i\to\infty$.

Define
\begin{eqnarray*}
u_{q_l}:=\lim_{i\to\infty}u_{q_l,n_i},
\end{eqnarray*}
and let
\begin{eqnarray}
U=\{u_{q_l}|q_l\in Q\}.
\end{eqnarray}
It follows from (\ref{k5}) that if $q_{l_1},q_{l_2}\in Q$, $q_{l_1}<q_{l_2}$ and $i$ is sufficiently large, then
\begin{eqnarray*}
u_{q_{l_1},n_i}\leq u_{q_{l_2},n_i}
\end{eqnarray*}
and hence by taking limits as $i\to\infty$ we have
\begin{eqnarray}\label{k7}
u_{q_{l_1}}\leq u_{q_{l_2}}\ \ (q_{l_1}<q_{l_2}).
\end{eqnarray}
This implies that $\{u_\lambda:\lambda\in Q\}$ is an increasing sequence with respect to the parameter $\lambda\in Q$.

Now for any $\lambda\geq\lambda_c$, there exists an increasing sequence $\{q_l\}\subset Q$ such that $q_l\to\lambda$. The increasing sequence $\{u_{q_l}\}$ then has a limit and we define
\begin{eqnarray}\label{k6}
 \tilde{u}_{\lambda}:=\lim_{l\to\infty} u_{q_l}.
\end{eqnarray}
Then $\tilde u_\lambda$ is a solution of equation~\eqref{one} for the parameter $\lambda\in\tilde\Lambda$. Let
\begin{eqnarray}
\bar{U}=\{\tilde{u}_{\lambda}
|\lambda\in [{\lambda}_c, +\infty) \}.
\end{eqnarray}
It follows from (\ref{k7}) and (\ref{k6}) that if $\lambda_1>\lambda_2$, then
\begin{eqnarray}
\tilde{u}_{\lambda_1}\geq\tilde{u}_{\lambda_2}.
\end{eqnarray}
Hence, $\tilde{u}_{\lambda}$ is increasing with respect to $\lambda\in[\lambda_c,\infty)$. This completes the proof of the lemma.
\end{proof}
By Lemma \ref{im1}, \textbf{the statement (ii) of Theorem \ref{thma} is proved}.

From Lemma \ref{im1} and the maximum principle, we can obtain that $\tilde{u}_{\lambda}>\tilde{u}_{{\lambda}_c}$ when $\lambda>\lambda_c$. Thus, $\tilde{v}_{\lambda}(=\tilde{u}_{\lambda}-u_0)$ is the local minimum point of the functional $J$ for any $\lambda >\lambda_c$. If $v_{\lambda}$ obtained by Lemma \ref{lem5.1} is different from $\tilde{v}_{\lambda}$. Then we have already found a second solution. If $v_{\lambda}$ is equal to $\tilde{v}_{\lambda}$ as the local minimum of the functional $J$, there exists ${\rho}_{\lambda}$ such that
\begin{eqnarray}
J(v_{\lambda})\leq J(v) \ \ \mbox{for any} \ \ \|v-v_{\lambda}\|\leq {\rho}_{\lambda}.
\end{eqnarray}

Then we have the following two cases.

\textbf{Case (i)}.
For any $\rho\in(0, {\rho}_{\lambda})$, one has
\begin{eqnarray}
\inf_{\|v-v_{\lambda}\|=\rho} J=J(v_{\lambda})(:={\eta}_{\lambda}).
\end{eqnarray}
Thus, there is a local minimum $v_{\rho}$ of $J$ satisfying $\|v_{\rho}-v_{\lambda}\|=\rho$ and $J(v_{\lambda})={\eta}_{\lambda}$ for any $\rho\in(0, {\rho}_{\lambda})$. Therefore, one can obtain a one-parameter family of solutions to the equation \eqref{one}.

\textbf{Case (ii)}.
There exists $ r_{\lambda}\in(0, {\rho}_{\lambda})$ such that
\begin{equation}\label{g1}
    	J(v_\lambda)< \inf_{\|w-v_\lambda\|=r_{\lambda}}J(w).
    \end{equation}

In the following, we prove that there is a second solution of (\ref{one}) on $ G $ for $ \lambda> \lambda_c $ by Mountain Pass theorem. We first show that $ J(v) $ satisfies the Palais-Smale condition in order to apply Mountain Pass theorem for the functional $J$.

\begin{lem}\label{lemm}
	Every sequence $ \{v_n\} \subset W^{1,2}(V) $ admits a convergent subsequence if $ \{v_n\} \subset W^{1,2}(V) $ satisfies
	\begin{flalign}
		\label{psa}
		 &(1)\  J(v_n)  \to c \ \mbox{for some constant} \ c, \  as \  n \to + \infty, &&\\
		\label{psb}
		 &(2)\  \Vert J'(v_n) \Vert \to 0 \ \mbox{in} \ W^{1,2}(V), \  as \  n \to + \infty.&&
	\end{flalign}
\end{lem}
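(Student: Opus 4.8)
The plan is to verify the Palais--Smale condition by first extracting boundedness of the sequence $\{v_n\}$ in $W^{1,2}(V)$ from the two hypotheses \eqref{psa} and \eqref{psb}, and then invoking the pre-compactness of $W^{1,2}(V)$ (Lemma \ref{le1}) to conclude. Since $G$ is a finite graph, $W^{1,2}(V)$ is finite-dimensional, so the only real content is the boundedness; once that is established, Lemma \ref{le1} immediately gives a convergent subsequence.

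For the boundedness, I would argue pointwise, exactly as in the proof of Lemma \ref{im}. Suppose $\{v_n\}$ is unbounded in $W^{1,2}(V)$; then, $V$ being finite, there is a vertex $x^\ast$ and a subsequence along which $|v_n(x^\ast)|\to+\infty$. First consider the case $v_n(x^\ast)\to+\infty$. Looking at
\[
J(v_n)=\frac12\int_V|\nabla v_n|^2+\frac{\lambda}{4}\int_V\big[e^{F(u_0+v_n)}-1\big]^4+\frac{4\pi M}{|V|}\int_V v_n,
\]
the first two terms are nonnegative, and $\big[e^{F(u_0+v_n)}-1\big]^4$ is uniformly bounded because $F$ maps $\mathbb R$ into a bounded set (recall $F\equiv0$ on $(0,\infty)$ and $\widetilde F\le 0$ with $e^{\widetilde F}$ bounded). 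So $J(v_n)$ is controlled below and above by $\frac{4\pi M}{|V|}\int_V v_n$ plus a bounded quantity plus the gradient term. To handle the case where some coordinates go to $+\infty$ and others to $-\infty$, I would instead use the derivative condition \eqref{psb}: testing $J'(v_n)$ against constants gives
\[
\langle J'(v_n),1\rangle=-\lambda\int_V e^{F(u_0+v_n)}\big[e^{F(u_0+v_n)}-1\big]^2+\frac{4\pi M}{|V|}\,|V|=o(1),
\]
and since the integrand $e^{F}\big[e^{F}-1\big]^2$ is bounded, this forces an a priori bound relating the ``large negative'' part of $v_n$ to $\lambda$; combined with the energy estimate $J(v_n)\to c$ this pins down both the positive and negative excursions. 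More precisely, once we know $\int_V v_n$ cannot go to $+\infty$ (energy bound) and the nonlinear flux term is $O(1)$ (derivative bound), a short argument using the connectedness of $G$ and the gradient term $\frac12\int_V|\nabla v_n|^2\le J(v_n)+C$ rules out any coordinate tending to $\pm\infty$.

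Having shown $\{v_n\}$ is bounded in $W^{1,2}(V)$, apply Lemma \ref{le1}: there is $v^\ast\in W^{1,2}(V)$ and a subsequence with $v_{n_k}\to v^\ast$ in $W^{1,2}(V)$. This is precisely the assertion of the lemma, so the proof concludes there. I expect the main obstacle to be the bookkeeping in the unbounded-sequence argument when different vertices blow up with different signs: one must combine \eqref{psa} and \eqref{psb} rather than either alone, and be careful that on the region where $u_0+v_n>0$ the nonlinearity $e^{F}[e^F-1]^2$ vanishes, so the flux term only sees the vertices where $u_0+v_n\le 0$. Everything else -- the boundedness of $F$ and of $e^{F}[e^{F}-1]^2$, the pre-compactness -- is routine given what has already been proved.
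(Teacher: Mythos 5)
Your overall skeleton (prove that $\{v_n\}$ is bounded at every vertex, then invoke Lemma \ref{le1}) is the same as the paper's, and two of your ingredients are correct and are also used there: testing $J'(v_n)$ against constant functions to get $\lambda\int_V e^{F(u_0+v_n)}[e^{F(u_0+v_n)}-1]^2=4\pi M+o(1)$ (this is (\ref{ten})), and the uniform boundedness of the quartic term since $F\le 0$. However, the heart of the lemma --- ruling out blow-up when some vertices may tend to $-\infty$ --- is exactly the step you leave as ``a short argument'', and the one inequality you offer for it, $\frac12\int_V|\nabla v_n|^2\le J(v_n)+C$, is not available a priori: from (\ref{k2}) one only gets $\frac12\int_V|\nabla v_n|^2\le J(v_n)-\frac{4\pi M}{|V|}\int_V v_n$, so a uniform gradient bound presupposes that $\int_V v_n$ is bounded below, which is precisely what fails if some coordinate goes to $-\infty$ (the energy hypothesis \eqref{psa} alone allows $\int_V v_n\to-\infty$ compensated by a growing Dirichlet term, so the argument as sketched is circular). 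Likewise, the flux identity by itself does not bound the ``large negative part'' of $v_n$; what it actually yields, and what the paper extracts from it, is that not every vertex can blow up: at any vertex where $u_0+v_{n_k}\to+\infty$ or $-\infty$ the integrand $e^{F}[e^{F}-1]^2$ tends to $0$, so if all vertices blew up the flux would vanish, contradicting the positive lower bound in (\ref{ten}); hence there is a vertex $\hat x$ along the subsequence at which $v_{n_k}(\hat x)$ stays bounded.

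The missing quantitative idea, supplied in the paper, is a growth comparison in $L_k:=\max_V|v_{n_k}|$. From $J(v_{n_k})=c+o(1)$ and $\frac{4\pi M}{|V|}\int_V v_{n_k}\ge -C'L_k$ one obtains $\frac12\sum_{x\in V}\sum_{y\sim x}\omega_{xy}[v_{n_k}(y)-v_{n_k}(x)]^2\le c+CL_k$, hence $|v_{n_k}(y)-v_{n_k}(x)|\le C\sqrt{c+CL_k}$ on every edge; joining the blow-up vertex $x^k$ to $\hat x$ by a path of bounded length (connectedness and finiteness of $G$) then forces $|v_{n_k}(x^k)-v_{n_k}(\hat x)|=O(\sqrt{L_k})$, contradicting the fact that this difference is of order $L_k$. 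In other words, the Dirichlet energy can grow at most linearly in $L_k$, whereas a blow-up coexisting with a bounded vertex would make it grow quadratically; it is this $\sqrt{L_k}$-versus-$L_k$ comparison, not a uniform gradient bound, that closes the argument. Without it your proposal has a genuine gap at its central step; with it, the remainder of your outline (pointwise boundedness, then pre-compactness from Lemma \ref{le1}) goes through exactly as in the paper.
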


\begin{proof}
	It follows from (\ref{psb}) that
	\begin{equation}
		\label{ps1}
		| \int_V \nabla{v_n} \cdot \nabla \varphi
		 -\lambda \int_V e^{F(u_0+v_n)} [e^{F(u_0+v_n)}-1]^2 \varphi
		 +\frac{4\pi M}{|V|}\int_V \varphi|
		\le \varepsilon_n \| \varphi \|,
	\end{equation}
    where $\varphi \in W^{1,2}(V) $ and $ \varepsilon_n \to 0 $ as $ n \to +\infty $.
    By taking $ \varphi = -1  $, one has
    \begin{equation}
    	\label{ten}
    	0<c \le \int_V e^{F(u_0+v_n)} [e^{F(u_0+v_n)}-1]^2 \le C,
    \end{equation}
    where $ c $ and $ C $ are constants depending only on $\lambda$.

   We claim that $ \{v_{n} (x)  \} $ is bounded for any $  x \in V $. Suppose that the claim is not true, there exists a point $ x^k \in V$ and a subsequence $ \{v_{n_k} (x^k)  \}\subset \{v_{n}   \} $ tends to $ +\infty $ or $ -\infty $ as $k \to +\infty$. Without loss of generality, we assume that $ \{v_{n_k} (x^k)  \}$ tends to $ +\infty $ as $k \to +\infty$.

   In the following, for the subsequence $\{v_{n_k}\}$, we first show that there exists a point $ \hat{x} \in V $ such that $ \{v_{n_k}(\hat{x})\} $ is bounded. If not, one can obtain that $ \{v_{n_k} (y)  \} $ tends to $ +\infty $ or $ -\infty $ for any point $ y \in V $ as $k \to +\infty$. Therefore, one can derive that
    \begin{equation}
    	\int_V e^{F((u_0+v_{n_k}) (y)  )} [e^{F({(u_0+v_{n_k})(y)  })}-1 ]^2\rightarrow 0, \ \ \mbox{as} \ k\rightarrow +\infty,
    \end{equation}
    which contradicts with (\ref{ten}).

    Let
    \begin{equation}
    	L_k := \max\limits_{V} |v_{n_k}(x) |=v_{n_k}(x^k).
    \end{equation}
    From (\ref{psa}), one has
    \begin{align}
    	\label{2}
    	J(v_{n_k}) &= \frac{1}{2} \int_V {|\nabla v_{n_k} |}^2 +\frac{\lambda}{4} \int_V [e^{F(u_0+v_{n_k})}-1]^4 +\frac{4\pi M}{|V|}\int_V v_{n_k} \nonumber \\
    	&=c + o_n(1).
    \end{align}
   Combining (\ref{gra}) and (\ref{2}) yields that
    \begin{equation}
    	c \ge \frac{1}{2} \sum_{x \in V} \sum_{y\sim{x}}
    	\omega_{xy}	{[ v_{n_k}(y) -v_{n_k}(x)  ]}^2
    	- 4\pi M L_k.
    \end{equation}
   It implies that
   \begin{equation}\label{l1}
   	| v_{n_k}(x) - v_{n_k}(y)   |
   \le C \sqrt{ c +4\pi M L_k},\  x \sim y\ ,\mbox{for any }x, \ y\in V,
   \end{equation}
    where $ C $ is a constant.

    Since the graph $G$ is finite and connected, there is a path $\{x_i\}_{i=0}^{l}\subset G$ between $\hat{x}$ and $x^k$ for $l\in \mathbb{N}$, that is
    \begin{eqnarray}\label{ll}
    \hat{x}=x_0\sim x_1 \sim x_2 \sim \cdots \sim x_l=x^k.
    \end{eqnarray}
    Combining (\ref{l1}) and (\ref{ll}), one can obtain that
    \begin{align}
    	v_{n_k}(\hat{x}) & \ge v_{n_k}(x^k) - (l-1)C\sqrt{c +4\pi M L_k} \nonumber \\
    	             & = L_k -(l-1)C \sqrt{c +4\pi M L_k} \nonumber \\
    	             & \to + \infty, \ \ \mbox{as} \ k\rightarrow +\infty.
    \end{align}
    This is a contradiction with the conclusion that $ \{v_{n_k}(\hat{x})\} $ is bounded. Therefore, $ \{v_{n} (x)  \} $ is bounded for any $  x \in V $. This implies that $ \{v_{n} (x)  \} $ is bounded in $W^{1,2}(V)$. Applying Sobolev embedding theorem(Lemma \ref{le1}), there exists a subsequence ${v_{n_k} (x)  } $ converges to some function $v(x)$ in $W^{1,2}(V)$. Hence, the Palais-Smale condition holds.
\end{proof}
For $Q>0$, one has
    \begin{eqnarray*}
    J(v_{\lambda}-Q)-J(v_{\lambda})&=& \frac{\lambda}{4} \int_V [e^{F(u_0+v_{\lambda}-Q)}-1]^4 - \frac{\lambda}{4} \int_V [e^{F(u_0+v_{\lambda})}-1]^4 -4 \pi MQ\\ \nonumber
                                   &\to& -\infty, \ \ \mbox{as} \ Q \to +\infty.
    \end{eqnarray*}
    Thus, one has
    \begin{eqnarray}
    J(v_{\lambda}-Q_0)<J(v_{\lambda})\ \ \mbox{for some} \ Q_0>r_{\lambda}>0 \ \mbox{sufficiently large}.
    \end{eqnarray}

Let
\begin{eqnarray*}
\Gamma:=\{\gamma\in C([0,1], W^{1,2}(V))|\gamma{(0)}= v_{\lambda}, \ \gamma{(1)}=v_{\lambda}-Q_0\},
\end{eqnarray*}
and set
\begin{eqnarray*}
c=\inf_{\gamma\in \Gamma}\max_{0\leq t \leq 1}J(\gamma(t)).
\end{eqnarray*}

It follows from (\ref{g1}) that $c>J(v_{\lambda})$. From Lemma \ref{lemm}, applying Mountain Pass theorem(Lemma \ref{le2}), we conclude that $c$ defines a critical value of $J$. Since $c>J(v_{\lambda})$, the corresponding critical point yields to a second solution for \eqref{one2}. Therefore, there exists a second solution to the equation \eqref{one}.  Hence, \textbf{the statement (iii) of Theorem \ref{thma} is proved}.

This completes the proof of Theorem \ref{thma}.

\section{The Proof of Theorem \ref{thmc}}
In this section, we prove Theorem \ref{thmc}. From the analysis in Section 3, we know that $\tilde{v}_{\lambda}$ is a local minimum of the functional $J(v)$ for $\lambda > \lambda_c$. In the following, we show that $\tilde{v}_{\lambda}$  is a strict local minimum of the functional $J$ for almost everywhere $\lambda > \lambda_c$.
\begin{proof}
	It follows from Lemma \ref{im1} that $\tilde{v}_{\lambda}$  is increasing with respect to $\lambda$ for $ \lambda> \lambda_c $. This guaranties that
    \begin{equation}
    	\label{mea}
    	\bar{v}_\lambda :=  \frac{d\tilde{v}_{\lambda}}{d \lambda}
    \end{equation}
   is well-defined for almost everywhere  $ \lambda>\lambda_c $, and therefore $ \bar{v}_\lambda \ge 0 \ a.e.$ in $V$.

    Taking the derivative of both sides of the equation \eqref{one2} with respect to $\lambda$ yields that
    \begin{equation}
    	\label{star}
    	\Delta \bar{v}_\lambda= -e^{F(u_0+\tilde{v}_\lambda)}  [e^{F(u_0+\tilde{v}_\lambda)} -1 ]^2
    	-\lambda e^{F(u_0+\tilde{v}_\lambda)} [1-3e^{F(u_0+\tilde{v}_\lambda)}] \cdot \bar{v}_\lambda.
    \end{equation}
    From the fact that $\tilde{v}_\lambda$ is the critical point of $ J(v) $, one can derive that
    \begin{equation}
    	0=<J'(\tilde{v}_\lambda), \varphi> =\int_V \nabla \tilde{v}_\lambda \cdot \nabla \varphi -\lambda \int_V
    	 e^{F(u_0+\tilde{v}_\lambda)}  [e^{F(u_0+\tilde{v}_\lambda)} -1 ]^2 \varphi +\frac{4\pi M}{|V|} \int_V\varphi,
    \end{equation}
    and
	\begin{flalign}\label{e}
		<J''(\tilde{v}_\lambda)\psi, \varphi>
		&=\int_V \nabla \psi \cdot \nabla \varphi -\lambda \int_V
		e^{F(u_0+\tilde{v}_\lambda)}  [1-3e^{F(u_0+\tilde{v}_\lambda)} ] \varphi \psi
	\end{flalign}
for any $\varphi, \psi\in W^{1,2}(V)$.

From (\ref{e}), one can see that
	\begin{equation}
		J''(\tilde{v}_\lambda)h= -\Delta h - \lambda  e^{F(u_0+\tilde{v}_\lambda)}  [1-3e^{F(u_0+\tilde{v}_\lambda)} ]h \ \ \mbox{for any } h \in W^{1,2}(V).
	\end{equation}
    Then
    \begin{equation*}
    	<J''(\tilde{v}_\lambda)h,g>=<J''(\tilde{v}_\lambda)g,h>\ \mbox{for any } h,g \in W^{1,2}(V).
    \end{equation*}
    This implies that $ J''(\tilde{v}_\lambda) $ is a symmetric and linear operator from $W^{1,2}(V)$ to $W^{1,2}(V)$.

    Denote $\mu_1, \mu_2, \cdots, \mu_{|V|}$ be the eigenvalues of $ J''(\tilde{v}_\lambda) $ satisfying $\mu_1\leq \mu_2 \leq \cdots \leq \mu_{|V|}$ and $ {\theta}_1,\dots, {\theta}_{|V|} $ be the related eigenvectors of $ J''(\tilde{v}_\lambda) $, i.e.
    \begin{equation*}
    	J''(\tilde{v}_\lambda) {\theta}_i=\mu_i {\theta}_i,
    \end{equation*}
    where
    $<{\theta}_i, {\theta}_j>=\delta_{ij},$ for any $i,j=1,2, \cdots, |V|$.

    Then the eigenvalue
    \begin{equation*}
    	\mu_1= \inf_{h \in {W^{1,2}(V)}} \frac{<J''(\tilde{v}_\lambda)h,h>}{<h,h>}.
    \end{equation*}
    In the following, we prove that $\mu_1>0$.

Noting that
\begin{flalign}\label{f}
		<J''(\tilde{v}_\lambda)h, h>
		&=\int_V |\nabla h|^2 -\lambda \int_V
		e^{F(u_0+\tilde{v}_\lambda)}  [1-3e^{F(u_0+\tilde{v}_\lambda)} ] h^2\\
        &= \sum_{x \in V} \sum_{y\sim{x}}
    	\omega_{xy} {[h(y)-h(x)]}^2-\lambda \int_V
		e^{F(u_0+\tilde{v}_\lambda)}  [1-3e^{F(u_0+\tilde{v}_\lambda)} ] h^2.\nonumber
	\end{flalign}
Let $ \tilde{h}=(|h_1|,|h_2|,\dots,|h_{|V|}|)$ for any $h=(h_1,h_2,\dots,h_{|V|}) \in W^{1,2}(V)$, it follows from (\ref{f}) that
\begin{equation}
	<J''(\tilde{v}_\lambda)\tilde{h},\tilde{h}>\ \leq \ <J''(\tilde{v}_\lambda)h,h>.
\end{equation}
Thus, one can obtain that all of the components of $ \theta_1 $ have the same sign. Without loss of generality, we assume that they are all non-negative, i.e.
\begin{eqnarray}
\theta_{1,x_i}\geq 0, \ \ \mbox{for} \ \ 1\leq i\leq |V|, \ x_i\in V.
\end{eqnarray}

Taking $\psi=\bar{v}_{\lambda}$, it follows from (\ref{e}) that
\begin{eqnarray}\label{4}
<J''(\tilde{v}_\lambda)\bar{v}_\lambda, \varphi>=\int_V e^{F(u_0+\tilde{v}_\lambda)}[e^{F(u_0+\tilde{v}_\lambda)} -1 ]^2 \varphi
\end{eqnarray}
for any $\varphi\in W^{1,2}(V)$. Let $\varphi={\theta}_1$, one has
\begin{equation}
	<J''(u_\lambda)\bar{v}_\lambda, {\theta}_1>=\int_V e^{F(u_0+\tilde{v}_\lambda)}[e^{F(u_0+\tilde{v}_\lambda)} -1 ]^2 {\theta_1} >0.
\end{equation}
Since $\bar{v}_\lambda\in W^{1,2}(V)$, then $\bar{v}_\lambda$ can be represented by
\begin{equation*}
	\bar{v}_\lambda=\sum_{i=1}^{|V|} a_i {\theta_i},
\end{equation*}
where $a_i=<\bar{v}_\lambda, \theta_i>$, $1\leq i \leq |V|$. One can see that $a_1\geq 0$, and
 \begin{equation*}
 	0< \ <J''(\tilde{v}_\lambda)\bar{v}_\lambda, {\theta_1}>=a_1 \mu_1.
 \end{equation*}

Thus, one has
\begin{eqnarray}
\mu_1 >0.
\end{eqnarray}
Therefore, $ \tilde{v}_\lambda $ is the strictly minimum of $ J(\tilde{v}_\lambda) $  for almost everywhere $\lambda> \lambda_c$. This implies that for almost everywhere $\lambda$,  there exists $ r_{\lambda}$ such that

    \begin{equation}\label{g}
    	J(\tilde{v}_\lambda)< \inf_{\|w-\tilde{v}_\lambda\|=r_{\lambda}}J(w).
    \end{equation}

 This completes the proof of Theorem \ref{thmc}.

\end{proof}

\section{Acknowledgement}
The authors would like to thank Professor Genggeng Huang for his interest in this work and for many helpful discussions.

\end{document}